\def\BState{\State\hskip-\ALG@thistlm}
\newcommand*{\mb}[1]{{\mathbf{#1}}}
\newtheorem{theorem}{Theorem}[section]
\newtheorem{lemma}[theorem]{Lemma}
\newtheorem{example}[theorem]{Example}
\DeclareMathSymbol{\shortminus}{\mathbin}{AMSa}{"39}
\newtheorem{remark}[theorem]{Remark}
\DeclareMathSymbol{\shortminus}{\mathbin}{AMSa}{"39}
\DeclareMathOperator*{\RANK}{rank}
\title{Efficient least squares approximation and collocation \\methods using radial basis functions}
\author{Yiqing Zhou and Daan Huybrechs}
\begin{document}
\maketitle

\begin{abstract}
We describe an efficient method for the approximation of functions using radial basis functions (RBFs), and extend this to a solver for boundary value problems on irregular domains. The method is based on RBFs with centers on a regular grid defined on a bounding box, with some of the centers outside the computational domain. The equation is discretized using collocation with oversampling, with collocation points inside the domain only, resulting in a rectangular linear system to be solved in a least squares sense. The goal of this paper is the efficient solution of that rectangular system. We show that the least squares problem splits into a regular part, which can be expedited with the FFT, and a low rank perturbation, which is treated separately with a direct solver. The rank of the perturbation is influenced by the irregular shape of the domain and by the weak enforcement of boundary conditions at points along the boundary. The solver extends the AZ algorithm which was previously proposed for function approximation involving frames and other overcomplete sets. The solver has near optimal log-linear complexity for univariate problems, and loses optimality for higher-dimensional problems but remains faster than a direct solver. A Matlab implementation is available at \url{https://gitlab.kuleuven.be/numa/public/2023-RBF-AZalgorithm}.
\end{abstract}

\section{Introduction}
\label{ss:intro}

Several families of methods for the solution of PDEs on irregular domains revolve around embedding the domain inside a bounding box. Examples include the immersed boundary method~\cite{peskin_2002}, volume penalty methods~\cite{babuvska1973finite,hester2021improving} and Fourier embedding methods~\cite{elghaoui1999mixed, elghaoui1996spectral}, among others. The motivation of these methods is to reuse existing efficient solvers on regular domains, along with regular discretizations, in the solution of the irregular problem.

For the simpler problem of function approximation on irregular domains, that aim is achieved by the recently proposed AZ algorithm~\cite{coppe2020az}. The AZ algorithm originated in the solution of Fourier extension problems, in which a function on an irregular domain is extended to a periodic function on a larger bounding box~\cite{lyon2011fast,matthysen2016fastfe,matthysen2017fastfe2d}. The algorithm exploits the fact that the approximation problem on the irregular domain is a low rank perturbation of the  approximation problem on the larger domain. In the case of Fourier extensions, the latter is efficiently solved by the FFT. The AZ algorithm couples this FFT solver with a direct solver for the low-rank perturbation

Extension approaches naturally lead to ill-conditioned linear systems. The underlying reason is that the extension of a function to a larger domain is not unique. Hence, there might be multiple different solutions of the larger problem. Yet, this particular cause of ill-conditioning does not preclude high accuracy numerical approximations, since any of those multiple solutions correspond to the same (unique) solution of the problem on the smaller domain. The stability of computations with these and other overcomplete sets is analyzed using the theoretical concept of frames in~\cite{adcock2019frames,adcock2020frames}. A practical outcome is that numerical stability can be achieved under certain conditions, such as a frame condition~\cite{christensen2016introduction}, and in particular by using rectangular systems rather than square systems. A highly accurate solution can be found numerically if one exists for which the coefficients in the representation, i.e., the solution vector of the linear system, has small or moderate coefficient norm.

In the context of differential equations, rectangular systems may arise from oversampled collocation methods. The analysis based on frames was extended to boundary integral equations in~\cite{maierhofer2022oversampled,maierhofer2022collocationbie}. When using radial basis functions (RBFs), a collocation method for PDE's is often referred to as Kansa's method~\cite{kansa1990method2}. {Least squares collocation methods with RBFs have been proposed at least in~\cite{platte2006,PiretFrames,tominec2021rbfFD,frykland2018,tominec2021rbffd2,tominec2022rbf, hashemi2021,adcock2022rbf_frames,larsson2017rbfpartition}. }The discretization method and the computational results in this paper are similar to those in the given references. The focus of the paper is a novel -- and more efficient -- way to solve the associated ill-conditioned rectangular linear system, starting from the AZ algorithm for extension frames. For that reason, in contrast to the setting of some of the references -- in particular those focusing on partition of unity and finite-difference approaches -- we aim for a global approximant rather than a local one.

Several fast methods for the computation of RBF approximations have been proposed, see, e.g., ~\cite{biancolini2017fast, wendland2006computational} and references therein. The listed methods include among others fast multi-pole method~\cite{cherrie2000fast}, a domain decomposition method~\cite{beatson2001fast, wendland2004scattered} and multilevel methods~\cite{ohtake20053d}. Other methods based on parallelization have also been proposed, such as~\cite{bollig2012solution}.

\subsection{Main results}
We describe an efficient solver for the problem of approximating smooth functions on irregular domains, as well as the solution of boundary value problems. We consider 1D and 2D problems. In all cases, the irregular domain is embedded into a larger bounding box, on which a periodic problem is defined.

\begin{itemize}
    \item We show that the periodic problem on the larger domain can be solved efficiently using the FFT in all cases, even when the linear system is rectangular.
    \item For 1D function approximation we employ Gaussian RBF's with equispaced centers on a larger interval. A variant of the AZ algorithm is formulated to solve the non-periodic problem as a low-rank perturbation of the larger periodic problem, with near optimal computational complexity.
    \item The algorithm is essentially the same for 2D problems, though somewhat more involved. The computational complexity improves upon cubic complexity of a direct solver for the linear system, but remains quadratic rather than log-linear.
    \item We show that the solution of boundary value problems requires only a small modification of the proposed AZ algorithm, namely to incorporate boundary conditions. This adds to the rank of the low-rank perturbation but, somewhat surprisingly, the computational complexity remains the same as that of the simpler problem of function approximation.
\end{itemize}

The solver also achieves high accuracy and stability in all cases, in spite of the ill-conditioning of the linear system.

\subsection{Outline of the paper}
The structure of the paper is as follows. In \S\ref{ss:1d_fa} we introduce the univariate periodized Gaussian radial basis functions for the approximation of univariate periodic and non-periodic functions. In \S\ref{ss:eff_alg_1d} we describe the proposed efficient solver for the associated linear systems, along with numerical results. We analyze the accuracy, stability and efficiency of the algorithm in \S\ref{ss:theo}.

In \S\ref{ss:2d_fa} we introduce periodic and non-periodic two-dimensional function approximation problems and corresponding efficient solvers. Here, we focus on the differences with the one-dimensional problems. Finally, in \S\ref{ss:bvp} we modify the function approximation solver to solve elliptic boundary value problems in 1D and 2D. Conclusions and some directions for future research are given in \S
\ref{ss:con}.

\section{Univariate function approximation}
\label{ss:1d_fa}

\subsection{Radial basis functions and their periodization}

The approximation to a function using RBFs can be written as 
\begin{equation}
f(\mathbf{x}) \approx \sum_{j=1}^N a_j \phi ( \lVert	\mathbf{x} - \mathbf{c}_j \rVert   ), \quad \mathbf{x} \in  \mathbb{R} ^d.
\end{equation}
where $\phi(r)$ is a  univariate function, called the \emph{radial basis function}, and $\Vert \cdot \Vert$ denotes the Euclidean distance between the point $\mathbf{x} $ and the center $\mathbf{c}_j$. The coefficients $a_j$ typically result from solving a linear system. Smooth RBF's often lead to spectral convergence when approximating smooth functions~\cite{buhmann2003radial}.  

In this paper we will mostly use the standard Gaussian RBF, defined with a \emph{shape parameter} $\varepsilon$ as
\begin{equation}\label{eq:rbf}
    \phi(r) =  \exp ( -\varepsilon^2 r^2).
\end{equation}
Moreover, we adopt the setting of \emph{periodized} RBF approximations of~\cite{PiretFrames,adcock2022rbf_frames}. The radial basis function is periodized with period $2T$ by summing its translates,
\begin{equation}\label{eq:periodic_rbf}
\phi^{\rm per}(x) \equiv \sum_{m=-\infty}^{\infty} \phi ( x - 2mT ).
\end{equation}
The sum in the right hand side converges whenever $\phi$ has compact support or, in the case of the Gaussian RBF, converges owing to the rapid decay of $\phi$. In the following we will frequently omit the superscript and implicitly refer to the periodized basis functions.

The choice of the shape parameter strongly influences the convergence behaviour of RBF approximations. {The study in~\cite{adcock2022rbf_frames} indicates that the linear regime $\varepsilon = cN$ yields optimal (algebraic) convergence rates $\mathcal{O}(N^{-k})$ for functions in the $H^k(\Omega)$ Sobolev space. This regime results in ill-conditioned linear systems, which are regularized with a threshold $\tau_0$ that can be chosen to be very small. A suitable proportionality constant $c$ is explicitly identified which ensures that accuracy up to the level of $\tau_0$ can be reached:
}
\begin{equation}\label{eq:constant_shapeparam}
    c = \frac{\pi}{ 2 T \sqrt{2 \log ( 1+ \tau_0 ^{-2})}},
\end{equation}
This choice of shape parameter guarantees the existence of accurate RBF approximations with moderate coefficient norm $\Vert \mb{a} \Vert$. {In turn, that is why} in spite of having to solve ill-conditioned linear systems such approximations are numerically computable with standard least squares methods~\cite{adcock2019frames,adcock2020frames,adcock2022rbf_frames}. We refer to~\cite[\S4.1 and \S4.2]{adcock2022rbf_frames} for a precise statement on the convergence behaviour in this setting.

\begin{remark}
We focus on the Gaussian RBF for several reasons. It is a popular choice in a range of applications and a stability analysis of least squares approximations is available in~\cite{adcock2022rbf_frames}. The algorithm could also be implemented for other smooth kernels with some localization, such that periodization is applicable, and we do so in \S\ref{ss:bvp}. In comparison to  spline kernels (investigated in~\cite{coppe2022splines}), (i) smooth kernels typically have faster than algebraic convergence rates and (ii) the current algorithm does not exploit sparsity in the linear system. In comparison to the original AZ algorithm of~\cite{coppe2020az}, the RBF setting also raises new stability issues, which we fully address for the Gaussian kernel in \S\ref{ss:theo}. Finally, we note that our 2D experiments are specific to the Gaussian kernel as we exploit the fact that the product of two Gaussians is again a Gaussian function.
\end{remark}

\subsection{A periodic least-squares approximation problem}
We first consider the approximation of periodic functions on $[-T,T]$ using the periodized RBF. To that end we choose $N$ equispaced \emph{centers} $C := \{ c_j \}_{j=1}^N$,
\begin{equation}\label{eq:set_centers}
 c_j = -T + (j-1)\frac{2T}{N}, \qquad j=1,\ldots,N.
\end{equation}
For a given \emph{oversampling factor} $s \in \mathbb{N}^+$, we introduce a larger grid of $L = sN$ points $\tilde{X} := \{ \tilde{x}_i \} _{i=1} ^ L$,
\begin{equation}\label{eq:set_Xtilde}
 \tilde{x}_i = -T + (i-1)\frac{2T}{L}, \qquad i=1,\ldots,L.
\end{equation}
The periodic discrete least squares approximation problem in these points results in the rectangular matrix $A^{\rm c} \in \mathbb{R}^{L \times N}$ with entries given by
\begin{equation}\label{eq:matrix_entries}
    A^{\rm c}_{i j} = \phi_j( \tilde{x}_i), \qquad  i = 1,2,\cdots, L, \quad j = 1,2,\cdots,N.
\end{equation}
The basis functions are $\phi_j(x) = \phi^{\rm per}(x-c_j)$. For a given function $f$, smooth and periodic on $[-T,T]$, this results in the linear system
\begin{equation}\label{eq:circulant_system}
    A^{\rm c} \mb{a}^{\rm c} = \mb{b}^{\rm c},
\end{equation}
with right hand side $b_i^{\rm c} = f(\tilde{x}_i)$, $i=1,\ldots,L$. The solution of~\eqref{eq:circulant_system} corresponds to the approximation
\[
 f(x) \approx \sum_{j=1}^N a_j^{\rm c} \phi_j(x).
\]

The periodic structure of the problem is better expressed with a small change in formulation. Since the basis functions are periodic and the centers are equispaced, and owing to the choice of $L=sN$ as an integer multiple of $N$, every $s$ rows of $A^{\rm c}$ repeat in a circulant structure. We define a permutation matrix $\Pi$ that brings $A^{\rm c}$ into block-column\footnote{We use the terminology block-column circulant matrix here, rather than block-circulant matrix, to emphasize the structure of a column of blocks in which each block is circulant.} circulant structure:
\begin{equation}
    \Pi A^{\rm c} = 
    \begin{bmatrix}
    A^{\rm c}_1 \\ A^{\rm c}_2 \\  \vdots  \\ A^{\rm c}_s
    \end{bmatrix}.
\end{equation}
Here, $A^{\rm c}_k \in \mathbb{R}^{N \times N}$, with $k=1,\ldots,s$, is a circulant matrix that corresponds to the rows $k + (j-1)s$, with $j=1,\ldots,N$, of the original system.
Applying the same permutation $\Pi$ to the left and right hand sides of~\eqref{eq:circulant_system} yields
\begin{equation} \label{eq:circulant_system_pi}
    \Pi A^{\rm c} \mb{a}^{\rm c} = \Pi \mb{b}^{\rm c},
\end{equation}
in which the solution $\mb{a}^{\rm c}$ remains unchanged.
The permutation can be seen as the reordering of the $L=sN$ points of $\tilde{X}$ into $s$ shifted grids of length $N$ each.

\begin{remark}
 The motivation for considering uniform spacings both for the centers and the sample points is the possibility of using the FFT in an efficient implementation, as the FFT diagonalizes circulant matrices.
\end{remark}

\subsection{Non-periodic least-squares approximation problem}

A non-periodic function on an interval can be approximated using a periodic basis on a larger domain. For radial basis functions this strategy was proposed, analyzed and shown to be effective by C. Piret in~\cite{PiretFrames}. A study of convergence characteristics and stability is followed in~\cite{adcock2022rbf_frames}, {
and was mentioned when we introduced the choice of $c$ in~\eqref{eq:constant_shapeparam}.}
We adopt the setting of these references. Thus, we choose the approximation domain $\Omega = [-1,1]$, embedded in a bounding box $\Omega_{\rm B} = [-T,T]$ with $T>1$. Centers and over-sampled grid points are defined by~\eqref{eq:set_centers} and~\eqref{eq:set_Xtilde}. Collocation points are selected from $\tilde{X}$ as the subset in $[-1,1]$:
\begin{equation}\label{eq:set_X}
 X = \{ x \in \tilde{X} \, | \, x \in [-1,1] \}.
\end{equation}
We denote the cardinality of that set by $M = \#X$. We choose the oversampling factor $s$ large enough such that $M > N$. We have approximately $M \approx \frac{L}{T} = \frac{sN}{T}$, so that it is sufficient that $s > T$.

The centers, grid points and collocation points are illustrated in Fig.~\ref{fig:fig_colpts_1D}. 

\begin{figure}[H]
\centering
\includegraphics[width=\textwidth]{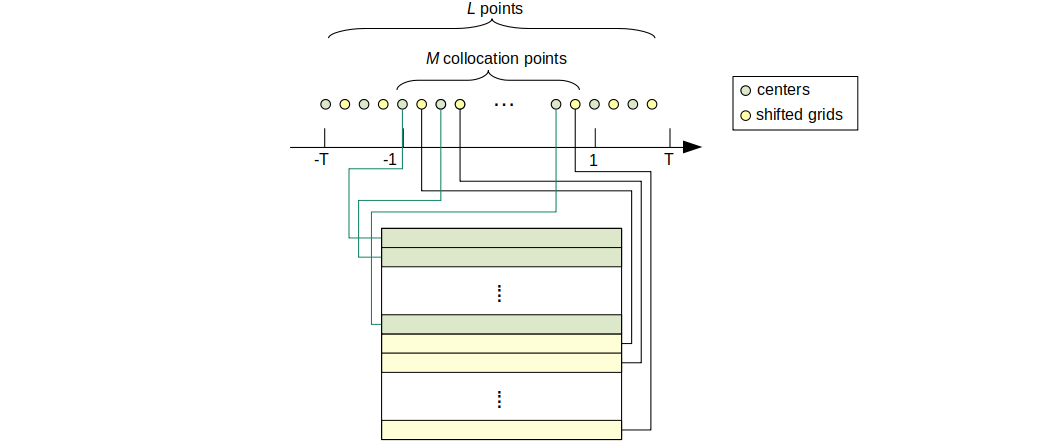}
\caption{Schematic description of the centers and collocation points for approximation on $[-1,1]$. The radial basis functions are periodized on the larger interval $[-T,T]$ and their centers are equispaced on $[-T,T]$ as well. The collocation points are equispaced on $[-1,1]$, in such a way that they form a subset of a larger equispaced grid on $[-T,T]$. Finally, oversampling by a factor $s$ is achieved by considering $s$ shifted grids. These grids are grouped together into the matrix $A$. The figure shows the case $s=2$.}\label{fig:fig_colpts_1D}
\end{figure}

Least squares approximation results in the rectangular matrix $A \in \mathbb{R}^{M \times N}$ with entries given by
\begin{equation}
    A_{i j} = \phi_j(x_i), \qquad  i = 1,2,\cdots, M, \quad j = 1,2,\cdots,N.
\end{equation}
For a given function $f$, letting $b_i = f(x_i)$, $i=1,\ldots,M$, we arrive at the linear system
\begin{equation}\label{eq:Ax_is_b}
    A \mb{a} = \mb{b}.
\end{equation}
Here, we order the collocation points $X$ in the same way as $\Pi \tilde{X}$, which is also illustrated in Fig.~\ref{fig:fig_colpts_1D}. This ensures that $A$ is embedded in the block-column circulant matrix $\Pi A^{\rm c}$, containing exactly a subset of its rows. We denote by $R \in \mathbb{R}^{M \times L}$ the corresponding restriction operator, i.e.,
\[
 A = R \Pi A^{\rm c}, \quad \mb{b} = R \Pi \mb{b}^{\rm c}.
\]
The restriction operator $R$ is not invertible, but its pseudo-inverse $E = R^\dagger$ corresponds to extension by zero-padding.

This method is illustrated with examples further on.

\begin{remark}
 When using Gaussian kernels, the periodicity of the extension does not affect the approximation space, at least not numerically. Indeed, as $N$ increases the kernels become increasingly narrow, and the periodized kernels near the edges of the bounding box are exponentially small on the domain of approximation. This effect plays a role in the proof of Theorem~\ref{theo:rank}.
\end{remark}

\section{Efficient algorithms for the approximation problems}
\label{ss:eff_alg_1d}

\subsection{Efficient algorithm for the periodic problem}
\label{ss:alg_per_1d}

A circulant matrix $C \in \mathbb{C}^{N \times N}$ is diagonalized using the discrete Fourier transform (DFT) matrix $F$ by $F^* C F$.
Here, $F \in \mathbb{C}^{N \times N}$ is given element-wise by
\[
 F_{kl} = \frac{1}{\sqrt{N}} {\rm e}^{2\pi {\rm i} (k-1)(l-1)/N}, \quad k,l = 1, \cdots, N,
\]
normalized so that $F F^* = I$. It can be applied efficiently using the fast Fourier transform (FFT).

Since $\Pi A^{\rm c}$ is block-column circulant, it is diagonalized by multiplication with $F$ on the right and with a block-diagonal matrix with $F^*$ on the diagonals on the left. E.g., for $s=2$, the latter matrix is
\begin{equation}\label{eq:P}
    P = \begin{bmatrix}
    F^* & 0\\
    0 & F^* 
    \end{bmatrix}.
\end{equation}
Left multiplying both sides of~\eqref{eq:circulant_system_pi} by $P$ leads to
\[
 P \Pi A^{\rm c} \mb{a}^{\rm c} = P \Pi \mb{b}^{\rm c}.
\]
Letting $\mb{y}^{\rm c} = F^* \mb{a}^{\rm c}$, and hence $\mb{a}^{\rm c} = F \mb{y}^{\rm c}$, we arrive at
\begin{equation}\label{eq:blockdiagonal_system}
 B \mb{y}^{\rm c} = \mb{c}^{\rm c},
\end{equation}
with $B = P \Pi A^{\rm c} F$ and $\mb{c}^{\rm c} = P \Pi \mb{b}^{\rm c}$. The matrix $B$ is block-column diagonal, consisting of diagonal blocks $D_i$, with diagonals $\mb{d}_i$, $i = 1, \ldots, s$. For example, in case of the Gaussian kernel the entries of $\mb{d}_i$ are given explicitly by
\begin{align} \label{eq:explicit_d}
    \left( \mb{d}_i \right)_k
    &= \sum_{n=1}^N \left[ \sum_{m=-\infty}^\infty {\rm e}^{-\varepsilon^2 ((n-1)h + (i-1)h_L + 2mT)^2} \right] {\rm e}^{\frac{{\rm i}2\pi(k-1)(n-1)}{N}} \notag \\
    &= \sum_{n=-\infty}^{+\infty} {\rm e}^{-\varepsilon^2((n-1)h + (i-1)h_L)^2} {\rm e}^{\frac{{\rm i}2\pi(k-1)(n-1)}{N}}.
\end{align}
The second line also shows that the infinite summation over samples of the kernel $\phi$ is equivalent to a finite sum over its periodization. We will use this property in the theory later on.

The least squares system~\eqref{eq:blockdiagonal_system} is solved by $\mb{y}^{\rm c} = B^\dagger\mb{c}^{\rm c}$, where the  pseudo-inverse of $B$ is block-row diagonal.  The blocks $\tilde{D}_i$ of $B^\dagger$ are given by

\begin{equation}
\tilde{D}_i =\left( \sum_{j=1}^s D_j^* D_j \right)^{-1}  D_i^* .
\end{equation}
For example, when $s=2$ we have
\[
 B = 
\begin{bmatrix}
D_1 \\ D_2
\end{bmatrix},
\]
and
\begin{equation}\label{eq:B_pinv}
 B^\dagger = (B^* B)^{-1} B^* = \begin{bmatrix}  \tilde{D}_1 & \tilde{D}_2  \end{bmatrix}.
\end{equation}
Here, the diagonals $\tilde{\mb{d}}_{1/2}$ of $\tilde{D}_{1/2}$ are given element-wise in terms of the diagonals $\mb{d}_{1/2}$ of $D_{1/2}$ by
\[
\tilde{\mb{d}}_{1,j} = \frac{\overline{\mb{d}}_{1,j}}{|\mb{d}_{1,j}|^2 + |\mb{d}_{2,j}|^2} \quad \mbox{and} \quad \tilde{\mb{d}}_{2,j} =  \frac{\overline{\mb{d}}_{2,j}}{|\mb{d}_{1,j}|^2 + |\mb{d}_{2,j}|^2}.
\]

Thus,~\eqref{eq:blockdiagonal_system} can be solved efficiently using the pseudo-inverse. This is followed by a single FFT
\[
 \mb{a}^{\rm c} = F \mb{y}^{\rm c},
\]
to recover the least squares solution of the original block-column circulant system.\footnote{The solution of a system involving a circulant matrix using the FFT is a standard operation. The authors are unaware of an extension of that approach in literature to block-column circulant matrices as outlined here.}

\subsection{AZ algorithm for non-periodic least-squares approximation problem}

\subsubsection{AZ algorithm}
The AZ algorithm can be used to speed up the solution of least squares problems in cases where an incomplete generalized inverse is known~\cite{coppe2020az}. In this section we show how to apply the AZ algorithm to linear system~\eqref{eq:Ax_is_b}, based on an efficient solver for the related block-column circulant system~\eqref{eq:circulant_system_pi}.  Other example applications of AZ include its original use to solve 2D Fourier extension problem~\cite{matthysen2017fastfe2d}, an application to spline-based approximations in~\cite{coppe2022splines} and enriched approximation spaces in~\cite{herremans2023az}.

Given a linear system $A \mb{x}=\mb{b}$ with $A \in \mathbb{C}^{M \times N}$, and an additional matrix $Z \in \mathbb{C}^{M \times N}$, the AZ algorithm consists of three steps.
\begin{algorithm}[H]
\caption{AZ algorithm}\label{alg: az_1d}
\textbf{Input:} $A, Z \in \mathbb{C}^{M \times N}$, $\mb{b} \in \mathbb{C}^M$\\
\textbf{Output:} $\mb{x} \in \mathbb{C}^N$ such that $A \mb{x} \approx \mb{b}$ in a least squares sense
\begin{algorithmic}[1]
\State Solve $(I - AZ^*)A \mb{x}_2 = (I - AZ^*) \mb{b}$
\State $\mb{x}_1 \leftarrow Z^* (\mb{b} - A \mb{x}_2)$
\State $\mb{x} \leftarrow \mb{x}_1 + \mb{x}_2$
\end{algorithmic}
\end{algorithm}

The algorithm crucially relies on the choice of a suitable matrix $Z^*$. That matrix is characterized in~\cite{coppe2020az} as an \emph{incomplete generalized inverse}. A generalized inverse $A^g$ of $A$ is such that
\[
 A A^g A = A \quad \iff A A^g A - A = 0.
\]
A matrix may have many generalized inverses, especially if it is rectangular or not of full rank~\cite{ben2003generalized,golub2013matrix}. Ideally, for the application of AZ, matrix $Z$ is close to a generalized inverse of $A$ in the sense that $A Z^*A -A$ has low rank. Thus, the system to be solved in step 1 of the AZ algorithm has low rank. In combination with a fast matrix-vector product for both $A$ and $Z^*$, the solver is amenable to randomized linear algebra methods~\cite{halko2011finding}. Step 2 consists of matrix-vector products with $A$ and $Z^*$ only, hence the method is efficient overall.

\subsubsection{Non-periodic least-squares approximation problem}
Our goal is to show that the solver of the periodic approximation problem leads to a suitable choice of $Z$ in the AZ algorithm for the non-periodic problem.

Since the matrix $A$ can be decomposed as $A = R \Pi A^{\rm c} = R P^* B F^*$ and we have already calculated the inverse or pseudo-inverse of these matrices explicitly, we can intuitively choose the incomplete generalized inverse matrix $Z^*$ as 
\[
Z^* = F B^\dagger P E.
\]
Thus the left hand side of the first step of AZ algorithm becomes
\[
A - A Z^* A = R P^* B F^* - (R P^* B F^*) (F B^\dagger P E) (R P^* B F^*).
\]
We prove later on that this matrix has rank much lower than $A$.

\subsubsection{Efficient implementation and computational complexity}
\label{ss:alg_eff_1d}

The first step of Algorithm~\ref{alg: az_1d} involves the solution of a low-rank linear system and some matrix-vector products. If these matrix-vector products can be computed efficiently, then the system of step 1 can be solved efficiently as well using randomized linear algebra~\cite{coppe2020az}.
We consider the different ingredients separately.

\paragraph{Multiplication with $B$ and $B^\dagger$}
$B$ and $B^\dagger$ consists of $s$ diagonal matrices, hence the cost of computing $B\mb{v}$ and $B^\dagger \mb{v}$ is $\mathcal{O}(s N)$. 

\paragraph{Multiplication with $R$ and $E$}
The operation $ER$ formally corresponds to setting rows to zero, and $R$ corresponds to selecting some rows of a matrix. These operations have no corresponding computational cost, as the rows are simply omitted in the calculations.

\paragraph{Multiplication with $P$ and $F$}
As mentioned before, multiplied by $F$ and $P$ costs the same as doing FFT, which is $N \log N$ and $s N \log N$.

\paragraph{Low-rank solver for $Ax=b$.} If matrix $A \in \mathbb{C}^{M\times N}$ has rank $r$, then the randomized low-rank solver requires $r+p$ matrix vector products, where $p$ is a small number to ensure that with high probability the randomized matrix has at least rank $r$. This is followed by the SVD of an $(r+p)$-by-$M$ matrix. The latter takes ${\mathcal O}(r^2 M)$ operations.

\paragraph{Computational complexity of the AZ solver}
With $A\in \mathbb{C}^{M\times N}$ and $M = {\mathcal O}(N)$, a direct solver for a rectangular least-squares problem $Ax=b$ scales as $\mathcal{O}(MN^2) = \mathcal{O}(N^3)$. For the AZ algorithm, the computational complexities of the steps are:
\begin{itemize}
    \item Step 0: Preparations
        \begin{itemize}
            \item Computation of $B$: $\mathcal{O}(sN \log N)$ flops.
            \item Computation of $B^\dagger$: $\mathcal{O}(sN)$ flops.
        \end{itemize}
    \item Step 1: 
        \begin{itemize}
            \item Computation of system matrix: $\mathcal{O}(sN \log N)$ flops.
            \item Computation of right-hand side: $\mathcal{O}(sN \log N)$ flops.
            \item Solving the least-squares system: if the rank of $A - AZ^*A$ is $r$, ${\mathcal{O}}(rsN\log N)$ flops for the matrix-vector products and ${\mathcal{O}}(M r^2)$ flops for the SVD.
        \end{itemize}
    \item Step 2: The computation of matrix-vector products $\mathcal{O}(sN \log N)$ flops.
    \item Step 3: The addition takes $\mathcal{O}(M)$ flops.
\end{itemize}
The dominant term depends on $r$, $s$ and $N$. Since $s$ is always very small and fixed, the complexity of AZ solver scales as $\mathcal{O}(r N \log N)$. If in addition $r$ is bounded independently of $N$, the total cost is just a multiple of $r$ of the FFT. In the later section \S~\ref{ss:theo} we will show precisely that, namely that $r$ is bounded independently of $N$.

\subsection{Numerical results}
We illustrate the effectiveness and performance of the method with a number of examples.

We illustrate the periodic solver first, in comparison to backslash for the least squares system. Fig.~\ref{fig:d1_fa_per} shows the results of approximating the periodic function $f(x) = \sin \left( \left[ \frac{N}{5} \right] \pi x \right) $ with periodized basis functions in $[-1,1]$.
The left panel shows that the maximum error of both solvers are below $1e-12$. The right panel shows that the computing time of backslash scales like $\mathcal{O}(N^3)$ while the computing time of the efficient solver is only $\mathcal{O}(N \log N)$. Though not shown in the figure, we also remark that the coefficient norms of the solutions in both cases remains bounded, with $\frac{\lVert \mb{a} \rVert}{\sqrt{N}} \approx 5$, suggesting both solvers are numerically stable.

Fig.~\ref{fig:d1_fa_rank} shows the rank and singular values of $A$ and $A-AZ^*A$. The left panel shows their rank. The rank of  $A-AZ^*A$ is a constant irrespective of the size of $N$, which will be explained in the next section. The right panel shows the singular values of $A-AZ^*A$ and $A$ for some specific $N$. The spectrum of $A$ has a continuous part, followed by a rapid plunge towards zero. The matrix $A-AZ^*A$ isolates this plunge region of size about $10$, which is why the AZ algorithm is efficient.

Fig.~\ref{fig:d1_fa_nonper} shows the results of approximating the non-periodic function $f(x) = \sin \left(  \frac{N}{5}  x \right) $ in $[-1,1]$ with periodic basis functions in $[-1.5,1.5]$ and oversampling factor $s=3$. The figure shows favourable complexity of the AZ algorithm. Note that the function becomes increasinlyg oscillatory with increasing $N$, hence the difficulty of the approximation problem increases with $N$ as well. The results nevertheless show that the error remains bounded. The coefficient norms of the solutions using these two solvers are also bounded, with $\frac{\lVert \mb{a} \rVert}{\sqrt{N}} < 1$ in this case. Since the main cost of the algorithm is a number of FFTs, the AZ solver scales very well to large $N$. On a contemporary laptop we observed that the computing time remains under $10$ seconds up to a million degrees of freedom ($N = 10^6$).

\begin{figure}[H]
\centering
\includegraphics[width=\textwidth]{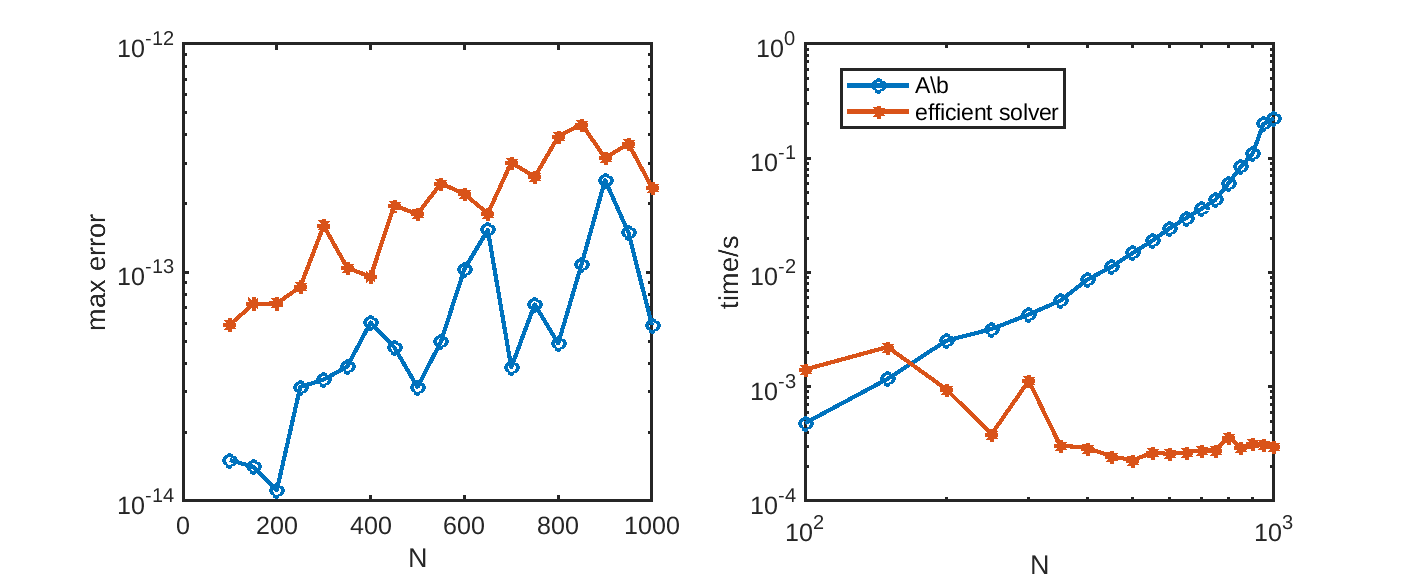}
\caption{We approximate the periodic function $f(x) = \sin \left( \left[ \frac{N}{5} \right] \pi x \right) $, with $T=1$ and $s=3$. Although the function is simple, it becomes increasingly oscillatory with $N$ and, hence, the approximation difficulty is constant. The purpose is to illustrate stability of the method for large $N$, which is confirmed in the left panel: the error remains roughly constant as $N$ increases. The right panel shows the computing time. The efficient solver outperforms a direct solver once $N > 150$.
}\label{fig:d1_fa_per}
\end{figure}

\begin{figure}[H]
\centering
\includegraphics[width=\textwidth]{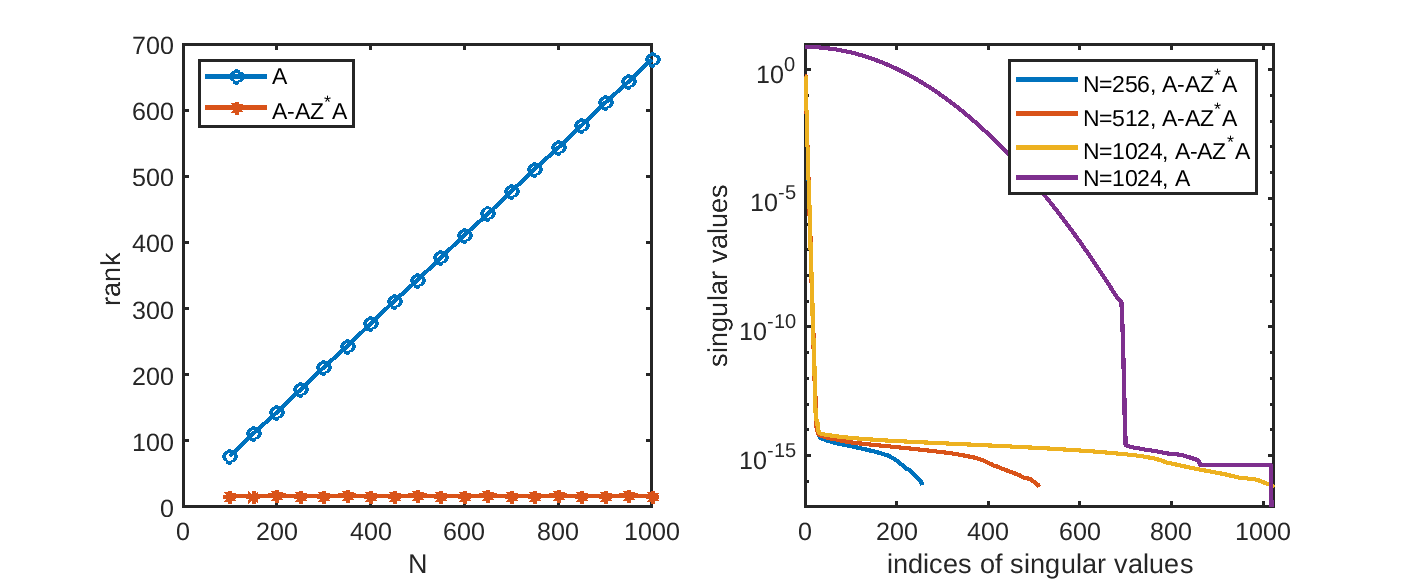}
\caption{Comparison of the rank of system matrix $A$ and $A - AZ^*A$. The left panel shows the rank of the two matrices with different $N$. The rank of $A$ increases with $N$, while the rank of  $A-AZ^*A$ remains constant.
The right panel shows the singular values of the two matrices with different $N$. The spectrum of $A-AZ^*A$ consists only of a so-called plunge region up to $N=10$, which is why the AZ algorithm is efficient.}\label{fig:d1_fa_rank}
\end{figure}

\begin{figure}[H]
\centering
\includegraphics[width=\textwidth]{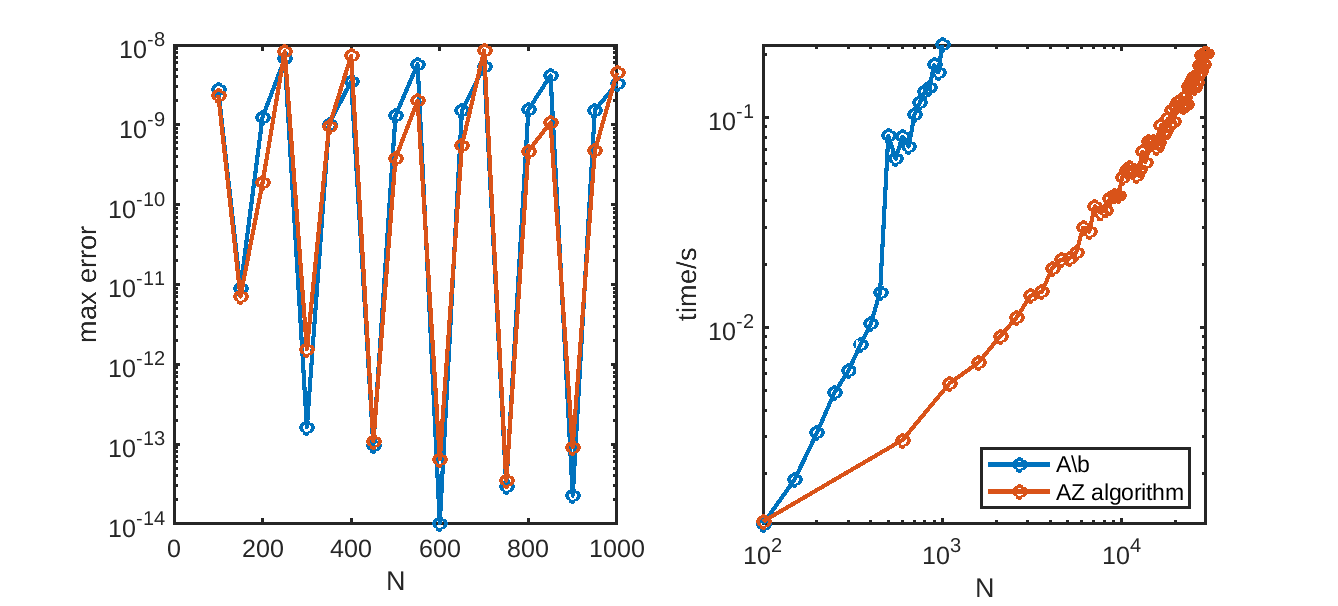}
\caption{We approximate the non-periodic function: $f(x) = \sin \left(  \frac{N}{5}  x \right) $ in $[-1,1]$, with $T=1.5$ and $s=2$. This function becomes increasingly oscillatory with $N$. The purpose is to illustrate stability of the method for large $N$, which is confirmed in the left panel: the error remains roughly constant as $N$ increases. The right panel shows the computing time. The AZ algorithm exhibits near linear complexity. }\label{fig:d1_fa_nonper}
\end{figure}

\section{Theoretical considerations}
\label{ss:theo}
We analyze the accuracy and efficiency of the AZ algorithm for univariate RBF approximations.

\subsection{Accuracy of AZ}
We note that, by construction, $Z^*$ is an exact solver for the periodic approximation problem. However, since matrix $A$ is typically ill-conditioned, owing to small singular values, matrix $Z^*$ necessarily has a large norm. This means that computations involving $Z^*$ may lead to loss of numerical accuracy. In this section we quantify the size of $\Vert Z^*\Vert$ and analyze its effect.

The accuracy of the AZ algorithm is described by the following Lemma, which we quote from~\cite{coppe2020az}.

\begin{lemma}[{\cite[Lemma 2.2]{coppe2020az}}]\label{theo:azerror}
Let $A \in \mathbb{R}^{M\times N}$, $\mathbf{b} \in \mathbb{R}^M$, and suppose there exists a \emph{stable least squares fit} $\tilde{x} \in \mathbb{R}^N$ in the sense that
\begin{equation}\label{eq:stablefit}
 \|b-A\tilde{x}\| \leq \eta, \qquad \|\tilde{x}\| \leq C,
\end{equation}
for $\eta,\, C > 0$. Then there exists a solution $\hat{x}_1$ to step 1 of the AZ algorithm such that the computed solution $\hat{x} = \hat{x}_1 + \hat{x}_2$ satisfies,
\begin{equation} \label{eq:norm_AZ_lemma}
 \|b-A \hat{x}\| \leq \|I-AZ^*\| \eta, \qquad \|\hat{x}\| \leq C + \|Z^*\| \eta.
\end{equation}
\end{lemma}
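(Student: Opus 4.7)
The plan is to proceed constructively: rather than analyzing an arbitrary least-squares solution of step~1, I would exhibit the stable fit $\tilde{x}$ itself as a valid witness for step~1, and then trace the effect of steps~2 and~3 using only linearity and the triangle inequality.

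First I would set $r := \mathbf{b} - A\tilde{x}$, so that $\|r\| \leq \eta$ by the stable-fit hypothesis~\eqref{eq:stablefit}. The central observation is that $\tilde{x}$ is an admissible candidate for step~1: applying $(I-AZ^*)A$ to $\tilde{x}$ and subtracting $(I-AZ^*)\mathbf{b}$ yields $-(I-AZ^*)r$, whose norm is at most $\|I-AZ^*\|\eta$. Since step~1 is interpreted in the least-squares sense and the matrix $(I-AZ^*)A$ is generally rank-deficient, there is not a unique solution, and the lemma only requires that \emph{some} solution produce the stated bounds. The cleanest route is therefore to take $\hat{x}_2 := \tilde{x}$ as the specific solution the lemma promises to exist.

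Given this choice, step~2 yields $\hat{x}_1 = Z^*(\mathbf{b} - A\hat{x}_2) = Z^* r$, so $\|\hat{x}_1\| \leq \|Z^*\|\eta$. Step~3 then gives $\hat{x} = \hat{x}_1 + \hat{x}_2 = \tilde{x} + Z^* r$, and the triangle inequality immediately yields $\|\hat{x}\| \leq C + \|Z^*\|\eta$. For the residual bound, the identity
\[
\mathbf{b} - A\hat{x} \;=\; \mathbf{b} - A\tilde{x} - AZ^* r \;=\; r - AZ^* r \;=\; (I - AZ^*)\,r
\]
gives $\|\mathbf{b} - A\hat{x}\| \leq \|I - AZ^*\|\,\eta$. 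Both conclusions in~\eqref{eq:norm_AZ_lemma} follow.

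There is no substantive technical obstacle here, but the one conceptual point worth articulating carefully is the role of non-uniqueness. Because $A$ (and hence $(I-AZ^*)A$) is not of full column rank in the regime of interest, step~1 admits infinitely many least-squares solutions, whose norms can be arbitrarily large; the lemma is therefore an \emph{existence} statement rather than a bound on an arbitrary solver output. The argument sidesteps this entirely by naming $\tilde{x}$ as an explicit witness, which in practice is implicitly recovered when a minimum-norm least-squares solver is applied in step~1.
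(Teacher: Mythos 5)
Your argument is correct and is essentially the proof of this lemma in the cited reference \cite{coppe2020az} (the present paper only quotes the result without proof): one exhibits the stable fit $\tilde{x}$ itself as the step-1 witness, notes its step-1 residual is $(I-AZ^*)(\mathbf{b}-A\tilde{x})$, and then steps 2--3 give $\hat{x}=\tilde{x}+Z^*r$ with $\mathbf{b}-A\hat{x}=(I-AZ^*)r$, exactly as you wrote. Your remark on non-uniqueness also matches the intended reading of ``solution to step 1'' (an existence statement, realized in practice by truncated-SVD or randomized solvers), so there is nothing to add beyond the harmless index slip in the lemma statement itself ($\hat{x}_1$ versus $\hat{x}_2$).
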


The existence of a stable least squares fit to the RBF approximation problem is guaranteed, in the case of the Gaussian RBF, by the analysis in~\cite[\S4]{adcock2022rbf_frames}. Thus, the previous lemma implies accuracy of the AZ solution subject to bounds on $\Vert I - AZ^*\Vert$ and $\Vert Z^* \Vert$.

\begin{lemma}\label{lem:dft}
Let $h_L = \frac{2T}{L}$ and $h_N = \frac{2T}{N}$ and define $s$ shifted grids $\mathbf{\tilde{x}}_i$ of length $N$ by
\[ 
\tilde{x}_{i,k} = -T + (i-1) h_L + (k-1) h_N, \qquad i=1,\ldots,s, \quad k=1,\ldots,N.
\]
The diagonals $\mathbf{d}_i$ of the block-column diagonal matrix $B$ in~\eqref{eq:blockdiagonal_system} are the DFT of the periodic kernel $\phi_1(x) = \phi^{\rm per}(x-c_1)$ evaluated in these shifted grids. That is, $\mathbf{d}_i = F^* \mathbf{v}_i$ with $v_{i,k} = \phi_1(\tilde{x}_{i,k})$.
\end{lemma}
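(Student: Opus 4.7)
The plan is to exhibit each block $A^{\rm c}_i$ explicitly as a circulant matrix with first column $\mathbf{v}_i$, and then invoke the standard diagonalization of circulants by the DFT to identify the diagonal of $D_i$.

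First I would write down the entries of $A^{\rm c}_i$ directly. Since the block $A^{\rm c}_i$ corresponds to those rows of $A^{\rm c}$ whose evaluation points are the $i$-th shifted grid $\tilde{x}_{i,k} = -T + (i-1)h_L + (k-1)h_N$, and the $j$-th column corresponds to the basis function centred at $c_j = -T + (j-1)h_N$, a one-line computation gives
\[
 (A^{\rm c}_i)_{k,j} \;=\; \phi^{\rm per}\!\bigl(\tilde{x}_{i,k} - c_j\bigr) \;=\; \phi^{\rm per}\!\bigl((i-1)h_L + (k-j)h_N\bigr).
\]
The right-hand side depends on $k$ and $j$ only through $k-j$, and since $\phi^{\rm per}$ has period $2T = N h_N$ the dependence is in fact on $(k-j) \bmod N$. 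Hence $A^{\rm c}_i$ is circulant, which also re-confirms the block-column circulant form of $\Pi A^{\rm c}$ used in the preceding subsection.

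Next I would identify the first column of $A^{\rm c}_i$. Setting $j=1$ in the previous display, and using $c_1 = -T$, gives
\[
 (A^{\rm c}_i)_{k,1} \;=\; \phi^{\rm per}\!\bigl((i-1)h_L + (k-1)h_N\bigr) \;=\; \phi^{\rm per}(\tilde{x}_{i,k} - c_1) \;=\; \phi_1(\tilde{x}_{i,k}) \;=\; v_{i,k}.
\]
So the first column of $A^{\rm c}_i$ is precisely $\mathbf{v}_i$.

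Finally I would invoke the classical fact that every circulant matrix $C \in \mathbb{C}^{N\times N}$ is diagonalized by the DFT matrix, with $F^* C F$ a diagonal matrix whose diagonal is (up to the chosen normalisation of $F$) the DFT of the first column of $C$. Applying this to each block gives $F^* A^{\rm c}_i F = D_i$, and since by construction $B = P \Pi A^{\rm c} F$ has $D_i$ as its $i$-th diagonal block, the diagonal $\mathbf{d}_i$ of $D_i$ must equal $F^* \mathbf{v}_i$, as claimed.

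There is no real obstacle here: the lemma is essentially a book-keeping statement, and the only point requiring mild care is the consistent use of the indexing and normalisation conventions for $F$, $\Pi$ and the block decomposition, so that the diagonalisation formula $F^* C F = \operatorname{diag}(F^* \mathbf{c})$ holds with the same prefactor employed throughout the paper. A brief cross-check against the explicit expression~\eqref{eq:explicit_d} confirms that the resulting diagonal entries are indeed the samples of the periodised kernel transformed by $F^*$ on each shifted grid.
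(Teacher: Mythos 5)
Your proof is correct and follows essentially the same route as the paper's (much terser) argument: identify each block of $\Pi A^{\rm c}$ as a circulant matrix generated by the periodized kernel sampled on a shifted grid, then invoke diagonalization of circulants by the DFT. The only cosmetic difference is that you work with the first \emph{column} of each block (which is exactly the vector $\mathbf{v}_i$ of the lemma), whereas the paper phrases it in terms of first rows; both are equivalent up to the reflection/conjugation and $\sqrt{N}$-normalization conventions that the paper itself treats loosely, and which you rightly flag.
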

\begin{proof}
This follows by construction of $B$. Note that the grid of length $L = sN$ is the union of $s$ shifted grids as defined in this lemma. Hence, the first $s$ rows of $A^{\rm c}$, defined by~\eqref{eq:matrix_entries} are the first rows of each circulant block in $\Pi A^{\rm c}$. The diagonals of $B$ are the DFT of these rows.
\end{proof}

\begin{lemma}
\label{theo:tau}
For the Gaussian RBF $\phi(r) = e^{-\varepsilon^2 r^2}$ with $\varepsilon = c^*N$ chosen according to~\eqref{eq:constant_shapeparam} for a small value of $\tau_0$, the maximal entry of the block-row diagonal matrix $Z^*$ is
\[
 (Z^*)_{N/2+1,N/2+1} = {\mathcal O}(\tau_0^{-1}).
\]
\end{lemma}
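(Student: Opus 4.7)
The plan is to compute the diagonal entries of the block-row diagonal matrix $B^\dagger$ (the block-row diagonal factor of $Z^* = FB^\dagger PE$) via an explicit Fourier-series representation, substitute the prescribed shape parameter from~\eqref{eq:constant_shapeparam}, and read off the scaling at the Nyquist frequency.

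First, I would invoke Lemma~\ref{lem:dft} to write $\mathbf{d}_i = F^*\mathbf{v}_i$ with $v_{i,k} = \phi^{\rm per}((i-1)h_L + (k-1)h_N)$. Since $\phi^{\rm per}$ is $2T$-periodic and sampled at $N$ equispaced points per period, substituting its Fourier series $\phi^{\rm per}(x) = \sum_{\ell\in\mathbb{Z}} a_\ell e^{i\pi\ell x/T}$ yields the aliased DFT
\[
(\mathbf{d}_i)_k \;=\; \sqrt{N}\sum_{p\in\mathbb{Z}} a_{k-1+pN}\, e^{2\pi i (k-1+pN)(i-1)/(sN)}.
\]
Poisson summation supplies the Gaussian Fourier coefficients $a_\ell = \frac{\sqrt{\pi}}{2T\varepsilon}\, e^{-\pi^2\ell^2/(4T^2\varepsilon^2)}$, which decay super-exponentially in $|\ell|$.

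Second, at each DFT index $k$ the sum is dominated by the single term with smallest $|k-1+pN|$, giving $|(\mathbf{d}_i)_k| \approx \sqrt{N}\,|a_{\ell_k}|$ for a unique aliased frequency $\ell_k\in\{-N/2+1,\ldots,N/2\}$. The exceptional case is the Nyquist index $k = N/2+1$ (for even $N$), where $p=0$ and $p=-1$ contribute equal magnitudes at $\ell = \pm N/2$, and their phases collapse into $(\mathbf{d}_i)_{N/2+1} \approx 2\sqrt{N}\,a_{N/2}\cos(\pi(i-1)/s)$. The shifts $(i-1)h_L$ enter only through phases, so $|d_{j,k}|$ is independent of $j$ at non-Nyquist indices.

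Third, the formula $\tilde{d}_{i,k} = \overline{d}_{i,k}/\sum_{j=1}^s |d_{j,k}|^2$ then gives $|\tilde{d}_{i,k}| \approx 1/(s\sqrt{N}\,|a_{\ell_k}|)$ for non-Nyquist $k$; for the Nyquist index the identity $\sum_{j=1}^s \cos^2(\pi(j-1)/s) = s/2$ produces $\sum_j |d_{j,N/2+1}|^2 \approx 2sN|a_{N/2}|^2$, so $|\tilde{d}_{1,N/2+1}| \approx 1/(s\sqrt{N}\,|a_{N/2}|)$ at $i=1$, which is the maximum over $i$ since $|\cos(\pi(i-1)/s)|$ is maximised there. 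Because $|a_\ell|$ is strictly decreasing in $|\ell|$, Nyquist is the argmin of $|a_{\ell_k}|$ over $k$, placing the overall maximum entry at position $(N/2+1,N/2+1)$ of $\tilde D_1$.

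Finally, substituting $\varepsilon = c^*N$ from~\eqref{eq:constant_shapeparam} collapses the Nyquist exponent to $\tfrac12\log(1+\tau_0^{-2})$, whence $|a_{N/2}| = \frac{\sqrt{\pi}}{2T\varepsilon}(1+\tau_0^{-2})^{-1/2} = \mathcal{O}(\tau_0)$ as $\tau_0\to 0$; inserting this into the expression from Step 3 yields $|\tilde d_{1,N/2+1}| = \mathcal{O}(\tau_0^{-1})$. The main obstacle is the careful bookkeeping at the Nyquist index: both $|d_{i,N/2+1}|$ and $\sum_j |d_{j,N/2+1}|^2$ pick up factors linked to $\cos(\pi(i-1)/s)$, and one must check that these combine to give exactly the inverse-Fourier-coefficient scaling at $i=1$ rather than a spurious cancellation or blow-up. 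A secondary subtlety is verifying that the super-exponentially small aliasing contributions can be neglected uniformly in $k$ for the regime of $\varepsilon$ considered.
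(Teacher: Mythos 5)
Your proposal follows essentially the same route as the paper: represent the diagonals of $B$ through the Fourier coefficients of the periodized Gaussian (the paper does this via Poisson summation applied to the kernel samples, which is equivalent to your aliasing argument), locate the minimum at the Nyquist index, substitute~\eqref{eq:constant_shapeparam} to obtain a value of order $\tau_0$ there, and invert through $\tilde d_{i,k}=\overline d_{i,k}/\sum_{j=1}^s|d_{j,k}|^2$. Your treatment of the two aliased terms at Nyquist and of the sum over the $s$ shifted grids (the $\cos(\pi(i-1)/s)$ factors and $\sum_{j=1}^s\cos^2(\pi(j-1)/s)=s/2$, valid for $s\geq 2$) is in fact more explicit than the paper's remark that the other diagonals ``might be smaller''.

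There is, however, one concrete inconsistency in your bookkeeping of factors of $N$, which matters because the point of the lemma (see the discussion following it in the paper) is that the bound is uniform in $N$. The diagonal blocks $D_i$ of $B=P\Pi A^{\rm c}F$ contain the eigenvalues of the circulant blocks, i.e.\ the \emph{unnormalized} DFT of the kernel samples, as in~\eqref{eq:explicit_d}; hence $(\mathbf{d}_i)_k\approx N\sum_p a_{k-1+pN}\cdot(\text{phases})$, not $\sqrt N\sum_p a_{k-1+pN}$ (the statement of Lemma~\ref{lem:dft} with the unitary $F^*$ is the source of this $\sqrt N$ ambiguity, but~\eqref{eq:explicit_d} is what $B$ actually contains). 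Since $a_{N/2}=\frac{\sqrt\pi}{2T\varepsilon}(1+\tau_0^{-2})^{-1/2}$ and $\varepsilon=cN$, you have $a_{N/2}=\Theta(\tau_0/N)$ up to logarithmic factors; with your $\sqrt N$ normalization the formula $|\tilde d_{1,N/2+1}|\approx 1/(s\sqrt N\,|a_{N/2}|)$ evaluates to $\Theta(\sqrt N\,\tau_0^{-1})$, and your final step reaches $\mathcal{O}(\tau_0^{-1})$ only by treating $\frac{\sqrt\pi}{2T\varepsilon}$ as independent of $N$, which it is not. With the unnormalized scaling the two occurrences of $N$ cancel: $|d_{1,N/2+1}|\approx 2N|a_{N/2}|=\Theta(\tau_0)$ and $|\tilde d_{1,N/2+1}|\approx 1/(sN|a_{N/2}|)=\Theta(\tau_0^{-1})$ uniformly in $N$, which is exactly the balance between the $1/\varepsilon$ decay of the Fourier transform and the growing number of samples that the paper emphasizes after the lemma. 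Once this normalization is fixed, your argument is complete.
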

\begin{proof}
By Lemma~\ref{lem:dft} the diagonals of $B$ are given by the discrete Fourier transform of the kernel, sampled at equispaced points. For the case of the Gaussian kernel, we can derive an explicit expression. 

The Fourier transform of $x(t) = {\rm e} ^{-\varepsilon^2 h^2 t^2}$ is $X(k) = \frac{\sqrt{\pi}}{\varepsilon h} \exp \left( - \frac{\pi^2 k^2}{\varepsilon^2 h^2} \right)$. By the Poisson summation formula, for certain sample period $T$ and sample rate $k_s = 1/T$, the samples $x(nT)$ correspond to a periodic summation of $X(k)$:
\[
{\displaystyle X_{s}(k)\ \triangleq \sum _{m=-\infty }^{+\infty } X\left(k-m k_{s}\right)=\sum _{n=-\infty }^{+\infty }T\cdot x(nT) {\rm e}^{-i2\pi nTk}}.
\]

In our setting $T=1$ and $x(nT)  = x(n) = {\rm e} ^{ - \varepsilon^2 h^2 n^2}  $. The corresponding sample rate is $k_{\rm s} = 1/T = 1$. This leads to
\[ 
    X_{\rm s} (k) = \sum_{m = -\infty}^{+\infty} X(k - m)
    =  \sum_{n=-\infty}^{+ \infty}  x(n) {\rm e}^{-i2\pi nk}.
\]
After a scaling with $N$, the number of centers,
\begin{align*}
    X_{\rm s} \left(\frac{k}{N}\right) &= \sum_{m = -\infty}^{+\infty} X(k/N - m) 
    = \sum_{n=-\infty}^{+\infty} x(n) {\rm e}^{-i2\pi n k/N} \\
    &= \sum_{n=-\infty}^{+\infty} {\rm e}^{-\varepsilon^2(n h)^2} {\rm e}^{{\rm i}2\pi n k /N}
    = \left( \mb{d}_1 \right)_{k+1}.
\end{align*}
The last equality is the same as~\eqref{eq:explicit_d}. We see that $X_{\rm s} (\frac{k-1}{N})$ is equal to $\left( \mb{d}_1 \right)_k$, the first diagonal of $B$. It is a sum of overlapping Gaussian's, for which the minimum value arises at the point in the middle of two centers, corresponding to index $k=N/2$. This leads to
\begin{align*}
    X_{\rm s} \left( \frac{1}{2} \right) &=   \frac{\sqrt{\pi}}{\varepsilon h} \sum_{m = -\infty}^{+\infty} \exp \left( - \frac{\pi^2 (1/2-m)^2}{\varepsilon^2 h^2} \right)
    \\
    & \approx  2 \sqrt{ \frac{2 \log \left( 1+\tau_0^{-2} \right)}{\pi}} \sum_{m = 0}^{+\infty} \tau_0 ^{ 4 (1/2-m)^2} = \mathcal{O}(\tau_0).
\end{align*}
This is also $\left( \mb{d}_1 \right)_{N/2+1}$, the minimum value of $\mb{d}_1$.

The expressions for  other diagonals $\mb{d}_i$, $i=2,\ldots,s$, are entirely similar, as a shift in time merely corresponds to a phase shift in the frequency domain. However, the elements of the other diagonals might be smaller, as the phase shifts may induce terms with alternating signs, whereas the Gaussian kernel and its translates in the expression for $\mb{d}_1$ themselves are always positive. Since the diagonals of $Z^*$ are given by the inverse of the sum of the squares of all diagonals, the maximal entry of $Z^*$ is determined by the smallest element of $\mb{d}_1$. Hence it is $\mathcal{O}(\tau_0^{-1})$.
\end{proof}

The result is that the norm of $Z$ may indeed be large, but it is bounded. This happens in spite of the exponential decay of the Fourier transform of the Gaussian function. Loosely speaking, that decay is counteracted by the scaling with $N$, which causes the support of the kernels to decrease (and their Fourier transforms to grow) as $N$ increases. Both effects balance each other.

Since $\Vert Z^* \Vert$ multiplies $\eta$ in~\eqref{eq:norm_AZ_lemma}, its impact still remains bounded if $\eta \sim \tau_0$. Moreover, we can also bound the other factor $\Vert I - AZ^*\Vert$ in our setting.

\begin{lemma}
\label{theo:IminAZ}
\[
\Vert I - A Z^* \Vert \leq 2.
\]
\end{lemma}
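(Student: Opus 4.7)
The plan is to show that $AZ^*$ is the product of three operators each with norm at most one, and then conclude by the triangle inequality applied to $I - AZ^*$.

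First, I would substitute the explicit factorizations derived earlier in the section. From $B = P\Pi A^{\rm c} F$ and the unitarity of $F$ and $P$, we have $A = R\Pi A^{\rm c} = RP^* B F^*$, while by construction $Z^* = F B^\dagger P E$. Multiplying and cancelling $F^*F = I$ gives
\[
AZ^* = RP^* B B^\dagger P E.
\]

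Next I would observe that $BB^\dagger = B(B^*B)^{-1}B^*$ is the orthogonal projection onto the range of $B$ in $\mathbb{C}^L$, and hence has spectral norm at most one. Since $P$ is unitary (being block-diagonal with the unitary $F^*$ on the diagonal), conjugation preserves orthogonal projections, so $P^* BB^\dagger P$ is again an orthogonal projection and in particular has norm $\le 1$. Finally, $R$ is a row-selection matrix with orthonormal rows, so $\|R\| = 1$, and its pseudo-inverse $E = R^\dagger = R^*$ satisfies $\|E\| = 1$ as well. Combining these gives $\|AZ^*\| \le \|R\|\,\|P^* BB^\dagger P\|\,\|E\| \le 1$, and then the triangle inequality yields
\[
\|I - AZ^*\| \le \|I\| + \|AZ^*\| \le 1 + 1 = 2.
\]

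There is no real obstacle: the bound is essentially structural, expressing the fact that composing a projection with a coordinate restriction and a zero-padding can only shrink norms. The only point that needs a sentence of care is justifying that $B^*B$ is invertible so that $BB^\dagger$ is a genuine orthogonal projector; this follows because $B$ has full column rank whenever the oversampled periodic approximation problem is well-posed (which is implicit in the derivation of the closed form $B^\dagger = (B^*B)^{-1}B^*$ used in \S\ref{ss:alg_per_1d}).
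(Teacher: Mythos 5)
Your argument is correct and follows essentially the same route as the paper: factor $A Z^* = R P^* B B^\dagger P E$, note that $BB^\dagger$ is an orthogonal projection and that $P$, $R$, $E$ do not increase norms, so $\Vert AZ^*\Vert \le 1$, and finish with the triangle inequality. Your added remark on the invertibility of $B^*B$ (equivalently, that one may simply use the Moore--Penrose pseudo-inverse, under which $BB^\dagger$ is an orthogonal projector in any case) is a harmless extra precaution and, if anything, slightly tightens the paper's phrasing, which loosely calls the whole product an orthogonal projection.
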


\begin{proof}
Since $A$ and $Z^*$ can be written as $RP^*BF^*$ and $FB^\dagger PE$ separately, 
\[
I - A Z^* = I - R P^* B F^* F B^\dagger P E = I - A Z^* = I - R P^* B B^\dagger P E ,
\]
where $B B^\dagger$ is an orthogonal projection, by properties of the pseudo-inverse, $P$ and $P^*$ are unitary matrices, $E$ and $R$ are also orthogonal projection, hence so is $ R P^* B B^\dagger P E 
 $. According to triangle inequality,
 \[
 \Vert I - A Z^* \Vert \leq  \Vert I \Vert  +  \Vert A Z^* \Vert \leq 2.
 \]

\end{proof}

We arrive at the following result.

\begin{theorem}\label{theo:accuracy}
If the discrete least squares problem~\eqref{eq:Ax_is_b} using the periodized Gaussian RBF has a solution satisfying
\begin{equation*}\label{eq:stablefit}
 \|b-A\tilde{x}\| \leq C \tau_0, \qquad \|\tilde{x}\| \leq D,
\end{equation*}
for some constants $C, D > 0$, and with $\tau_0$ determining the shape parameter $\varepsilon$, then a solution $\mathbf{x}$ of the AZ algorithm exists that satisfies
\[
\Vert A \mathbf{x} -b \Vert\leq C \tau_0 \quad \mbox{and} \quad \Vert \mathbf{x} \Vert \leq E,
\]
for some constant $E > 0$ independently of $\tau_0$.
\end{theorem}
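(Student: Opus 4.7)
The plan is to deduce Theorem~\ref{theo:accuracy} as a direct corollary of Lemma~\ref{theo:azerror}, using the quantitative bounds on $\Vert I - AZ^*\Vert$ and $\Vert Z^*\Vert$ established in Lemmas~\ref{theo:IminAZ} and~\ref{theo:tau}. Since the hypothesis already packages the existence of a stable least squares fit in exactly the form required by Lemma~\ref{theo:azerror}, the work is essentially that of substituting the three ingredients into the inequalities~\eqref{eq:norm_AZ_lemma}.

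First I would invoke Lemma~\ref{theo:azerror} with $\eta = C\tau_0$ and $\tilde{C} = D$. This immediately yields a solution $\hat{x}$ of the AZ algorithm such that $\Vert b - A\hat{x}\Vert \leq \Vert I - AZ^*\Vert \, C\tau_0$ and $\Vert \hat{x}\Vert \leq D + \Vert Z^*\Vert\, C\tau_0$. Next, I would apply Lemma~\ref{theo:IminAZ} to bound the first factor by $2$, giving $\Vert b - A\hat{x}\Vert \leq 2C\tau_0$, which matches the claim of the theorem up to an absorption of the constant $2$ into $C$.

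The coefficient norm bound is the place where one might worry: by Lemma~\ref{theo:tau}, $\Vert Z^*\Vert = \mathcal{O}(\tau_0^{-1})$, so naively one could fear an unbounded contribution. The key observation, which I would emphasize, is that the two factors $\Vert Z^*\Vert$ and $\eta$ appear multiplied together in~\eqref{eq:norm_AZ_lemma}, and the choice of shape parameter~\eqref{eq:constant_shapeparam} has been calibrated precisely so that a stable fit with residual $\eta = \mathcal{O}(\tau_0)$ exists. Consequently $\Vert Z^*\Vert\, \eta = \mathcal{O}(\tau_0^{-1})\cdot \mathcal{O}(\tau_0) = \mathcal{O}(1)$, and one obtains $\Vert \hat{x}\Vert \leq D + C' =: E$ for a constant $E$ independent of $\tau_0$ (though of course depending on the constants $C, D$ from the hypothesis and on the hidden constants in Lemma~\ref{theo:tau}).

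The only mildly subtle step is the need to argue that Lemma~\ref{theo:tau}, which bounds a single entry of the diagonal pseudo-inverse, actually controls the operator norm $\Vert Z^*\Vert$ itself. Since $Z^* = F B^\dagger P E$ is composed of unitary factors $F$, $P$ (and an orthogonal projection/restriction $E$) together with the block-row diagonal matrix $B^\dagger$, the spectral norm of $Z^*$ equals the largest magnitude of the diagonal entries of $B^\dagger$, which Lemma~\ref{theo:tau} identifies as the central entry of size $\mathcal{O}(\tau_0^{-1})$. Once that identification is made explicit, the two estimates plug into Lemma~\ref{theo:azerror} and the theorem follows immediately.
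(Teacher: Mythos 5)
Your proposal is correct and follows exactly the paper's own route: the paper's proof is a one-line invocation of Lemma~\ref{theo:azerror} combined with Lemmas~\ref{theo:tau} and~\ref{theo:IminAZ}, which is precisely the substitution you carry out (including the cancellation $\Vert Z^*\Vert\,\eta = \mathcal{O}(\tau_0^{-1})\cdot\mathcal{O}(\tau_0) = \mathcal{O}(1)$ that the paper leaves implicit). Your added remarks---absorbing the factor $2$ from $\Vert I - AZ^*\Vert$ into the constant, and passing from the maximal entry of the block-row diagonal $B^\dagger$ to the operator norm of $Z^*$ via the unitary factors $F$, $P$ and the norm-one extension $E$---only make explicit details the paper glosses over.
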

\begin{proof}
It follows from Lemma~\ref{theo:azerror} by combining Lemma~\ref{theo:tau} and Lemma~\ref{theo:IminAZ} that a solution exists with the given bound.
\end{proof}
The importance of establishing the existence of stable least squares fits is that such solutions can be computed using a truncated SVD~\cite{adcock2020frames,coppe2020az} and, with high probability, using randomized linear algebra methods~\cite[Theorem 3.4]{coppe2020az}. Thus, in spite of large norm of $Z$, the AZ algorithm for univariate RBF approximation is numerically stable and accurate.

As we shall see further on, the conclusion is different for 2D approximations.
Different from Lemma~\ref{theo:tau}, the maximal entry of $Z^*$ is $\mathcal{O} (\tau_0^{-2})$, but it is also bounded. It is because the matrix $Z^*$ in 2D is the tensor product of incomplete generalized inverse matrices in two directions, and the maximal entries are both in the middle of diagonals.
Also according to~\eqref{eq:norm_AZ_lemma}, the influence of  $\Vert Z^* \Vert$  still remains bounded if $\eta \sim \tau_0^2$.

\subsection{Efficiency of AZ}
The efficiency depends on the rank $r$ of the system in step 1. Recall that the computational cost of the SVD in this step is ${\mathcal O}(Mr^2)$. Using the fact that the Gaussian RBF nearly has compact support, because it decays so quickly, we can establish that the rank $r$ is essentially constant, making this step ${\mathcal O}(M)$ and, with a constant oversampling factor, also ${\mathcal O}(N)$.

To that end we introduce a truncation parameter $\delta$ in the following theorem, below which we truncate the Gaussian basis functions. Although the proof of the statement is technically involved, the gist of it is that there is just a bounded number of truncated basis functions that overlap with the boundary of the approximation domain. All other basis functions are either contained in the interior of the interval $[-1,1]$, or in the complement $[-T,T] \setminus [-1,1]$. Loosely speaking, those are accounted for exactly by the action of $Z^*$.

\begin{theorem}
\label{theo:rank}
Using Gaussian RBFs with shape parameter $\varepsilon$ satisfying~\eqref{eq:constant_shapeparam}, the linear system $A  - A  Z^* A$ has approximately constant rank in the following sense. There exists a bounded matrix $Q$, with $\Vert Q \Vert_\infty \leq 1$, such that for any small tolerance $\delta > 0$ we have
\[
 \RANK (A  - A  Z^* A + \delta Q)\leq 4W
\]
with
\begin{equation}
 W = \frac{1}{\pi} \sqrt{-2 \log \delta \log(1+\tau_0^{-2})}.
\end{equation}
\end{theorem}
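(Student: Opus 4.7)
The plan is to truncate each Gaussian kernel at level $\delta$, exposing its near-compact support, classify the centers according to how this support sits in the bounding box, and argue that only centers near the boundary of $[-1,1]$ contribute to $A - A Z^* A$ beyond a $\delta$-small residual. The truncation radius $r_\delta$ is fixed by $\phi(r_\delta) = e^{-\varepsilon^2 r_\delta^2} = \delta$, giving $r_\delta = \sqrt{-\log \delta}/\varepsilon$. Substituting $\varepsilon = c^* N$ from~\eqref{eq:constant_shapeparam} and the center spacing $h_N = 2T/N$ yields
\[
 r_\delta = \frac{2T}{\pi N}\sqrt{-2\log \delta \,\log(1+\tau_0^{-2})} = W\, h_N,
\]
so the truncated kernel reaches exactly $W$ center spacings on each side of its peak.

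I then partition $\{c_j\}$ into an \emph{interior} set $J_I$ with $[c_j-r_\delta,c_j+r_\delta]\subset[-1,1]$, an \emph{exterior} set $J_E$ with that interval disjoint from $[-1,1]$, and a \emph{boundary} set $J_B$ of the remaining centers, which lie within $r_\delta$ of either $\pm 1$. The uniform spacing of the $c_j$'s forces $|J_B|\leq 2\cdot 2r_\delta/h_N = 4W$. For $j\in J_I$ the full-grid vector $\Pi A^{\rm c}\mb{e}_j$ and its zero-extended restriction $ER\,\Pi A^{\rm c}\mb{e}_j$ differ only at grid points outside $[-1,1]$, where $\phi_j$ is already below $\delta$. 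Using the factorization $\Pi A^{\rm c}=P^* B F^*$ from \S\ref{ss:alg_per_1d} together with $B^\dagger B = I$ (the block-column diagonal $B$ has full column rank), a direct calculation gives
\[
 Z^* A\mb{e}_j = F B^\dagger P(ER)\Pi A^{\rm c}\mb{e}_j = F B^\dagger B F^*\mb{e}_j + \mb{r}_j = \mb{e}_j + \mb{r}_j,
\]
where $\mb{r}_j = F B^\dagger P(ER-I)\Pi A^{\rm c}\mb{e}_j$ collects only the truncated tail of $\phi_j$ outside $[-1,1]$. A symmetric argument for $j\in J_E$ uses that $A\mb{e}_j$ itself has entries $\leq\delta$ because $\phi_j<\delta$ on the collocation set. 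In both cases the $j$-th column of $A - AZ^*A$ is entry-wise of order $\delta$.

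With this in place I define $Q$ by setting its $j$-th column equal to $-\delta^{-1}$ times the $j$-th column of $A - AZ^*A$ for $j\in J_I\cup J_E$, and to zero for $j\in J_B$. Then $\|Q\|_\infty\leq 1$ and $A - AZ^*A + \delta Q$ retains only the $J_B$ columns of $A - AZ^*A$, so its rank is at most $|J_B|\leq 4W$, which is the required bound.

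The main obstacle is converting the tail estimate $\|(ER-I)\Pi A^{\rm c}\mb{e}_j\|_\infty\leq\delta$ into an entry-wise bound of the \emph{same} order on $\mb{r}_j$ after passing through $F B^\dagger P$. A naive operator-norm estimate fails: Lemma~\ref{theo:tau} gives $\|Z^*\|=\mathcal{O}(\tau_0^{-1})$, so each factor would amplify $\delta$ uncontrollably. The resolution is to use the explicit form of the DFT diagonals from Lemma~\ref{lem:dft}: the residual $(ER-I)\Pi A^{\rm c}\mb{e}_j$ consists only of samples of a Gaussian tail outside $[-1,1]$, and its discrete Fourier content is concentrated away from the worst frequency $k=N/2$ where the entries of $\mb{d}_1$ attain their $\mathcal{O}(\tau_0)$ minimum, so $B^\dagger$ does not amplify it. A minor additional point is the contribution of the periodization of $\phi_j$ across the bounding box, which likewise contributes $\mathcal{O}(\delta)$ tails that can be absorbed into $Q$.
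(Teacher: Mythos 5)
Your overall strategy is genuinely different from the paper's proof: you work column-by-column on $A-AZ^*A$, classify the centers as interior, exterior or boundary relative to the $\delta$-truncated support, and discard all non-boundary columns into $\delta Q$, whereas the paper conjugates to $B^{\rm e}-B^{\rm e}\tilde{Z}^*B^{\rm e}$, reduces the question to the commutator-type matrix $A_2^{\rm c}E_1R_1A_1^{\rm c}-A_1^{\rm c}E_2R_2A_2^{\rm c}$, and isolates two $2W\times 2W$ corner blocks via a bandwidth estimate combined with an odd-symmetry cancellation. Your route matches the informal intuition stated before the theorem and could be made rigorous, and the bookkeeping (the identification $r_\delta=Wh_N$, the count $|J_B|\leq 4W$ up to an $\mathcal{O}(1)$ additive term, $B^\dagger B=I$ since $\mb{d}_1>0$) is essentially fine. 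However, the step on which everything hinges is not correct as you resolve it.

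The gap is your treatment of $\mb{r}_j=FB^\dagger P(ER-I)\Pi A^{\rm c}\mb{e}_j$ for interior centers. The residual $\mb{w}=(I-ER)\Pi A^{\rm c}\mb{e}_j$ is a clipped Gaussian tail: a handful of samples of size up to $\delta$ decaying geometrically with an $\mathcal{O}(1)$ ratio. The DFT of such a localized vector is \emph{not} concentrated away from $k\approx N/2$; it is spread essentially uniformly at level $\Theta(\delta/\sqrt{N})$ (in the extreme case of a single entry of size $\delta$ the DFT is exactly flat), so the frequencies near $N/2$, where by Lemma~\ref{theo:tau} the multiplier in $B^\dagger$ is $\mathcal{O}(\tau_0^{-1})$, receive no suppression. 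Summing over the band of frequencies with comparable amplification, $\|\mb{r}_j\|$ is of order $\tau_0^{-1}\delta$ rather than $\delta$, so the claimed entrywise-$\delta$ bound for interior columns does not follow from your argument. The repair is to never bound $\mb{r}_j$ in isolation: the $j$-th column of $A-AZ^*A$ equals $A\,FB^\dagger P\,\mb{w}=RP^*(BB^\dagger)P\mb{w}$, using $A=RP^*BF^*$ and $FF^*=I$ as in \S\ref{ss:alg_per_1d}, and $BB^\dagger$ is an orthogonal projection, exactly as exploited in Lemma~\ref{theo:IminAZ}; hence the column has norm at most $\|\mb{w}\|_2=\mathcal{O}(\delta\sqrt{sW})$ with no $\tau_0^{-1}$ amplification and no frequency analysis needed. (The same projection bound, or Lemma~\ref{theo:IminAZ} directly, covers your exterior centers.) With that substitution, and after absorbing the mild $\sqrt{sW}$ factor and the $+\mathcal{O}(1)$ in the boundary count into the tolerance, your construction of $Q$ and the rank bound $\leq 4W$ go through.
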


\begin{proof}
First, we turn the system $A - AZ^*A$ into a similar system with block structure. We define $B^{\rm e} = P EAF^*$ and $\tilde{Z} = B^\dagger$, with $B^\dagger$ defined in~\eqref{eq:B_pinv}. Recall that $B = P \Pi A^{\rm c} F$ is block-column diagonal and $B^\dagger$ is its pseudo-inverse. One can think of the matrix $B^{\rm e}$ as an approximation of $B$ and, hence, $\tilde{Z}^*$ as an approximate pseudo-inverse of $B^{\rm e}$.

Using the definitions of the operators involved we find that
\[
B^{\rm e} - B^{\rm e} \tilde{Z}^* B^{\rm e} = PE(A - AZ^*A)F.
\]
Since $P$ and $F$ are full rank, and since $E$ simply extends the matrix by zeros without increasing its rank, we find that the rank of $A-AZ^*A$ and that of $B^{\rm e} - B^{\rm e} \tilde{Z}^* B^{\rm e}$ is exactly the same -- but the latter has a simpler structure to analyze.

Denote the diagonal blocks of $B$ by $D_i$, $i=1,2,\dots,s$, with corresponding diagonal vectors $\mb{d}_i$ given by~\eqref{eq:explicit_d}.  $B^{\rm e}$ can be written as 
\begin{align*}
    B^{\rm e} &= P A^{\rm e} F 
    = 
    \begin{bmatrix}
    F^* (E_1 R_1 A^{\rm c}_1) F \\
    F^* (E_2 R_2 A^{\rm c}_2) F \\
    \vdots \\
    F^* (E_s R_s A^{\rm c}_s) F \\
    \end{bmatrix}
    =
    \begin{bmatrix}
    C D_1 \\ C D_2 \\ \vdots \\ C D_s
    \end{bmatrix}.
\end{align*}
Here, $E_i \in \mathbb{C}^{N \times M_i}$ and $R_i \in \mathbb{C}^{M_i \times N}$, $i=1,2,\cdots,s$, are the matrices for extending and restricting $A_i^{\rm c}$ according to the subset of shifted centers in $[-1,1]$. Without loss of generality, all the $E_i$ are the same, and all the $R_i$ are the same.
We choose $\tilde{Z}^* = B^\dagger = \begin{bmatrix}
\tilde{Z}_1^* &  \tilde{Z}_2^* & \cdots &  \tilde{Z}_s^*  \end{bmatrix}$ for the AZ algorithm. Then $\tilde{Z}^* B = \sum_{i=1}^s \tilde{Z}^*_i B_i$ is an identity matrix.

Define $C = F^* E_i R_i F$, then we get 
\begin{align*}
     B^{\rm e}  - B^{\rm e}  \tilde{Z}^* B^{\rm e} &= 
    \begin{bmatrix}    
    C D_1 - \sum_{i=1}^s C D_1 \tilde{Z}_i^* C D_i \\ 
    C D_2 - \sum_{i=1}^s C D_2 \tilde{Z}_i^* C D_i \\ 
    \vdots \\ 
    C D_s -  \sum_{i=1}^s C D_s \tilde{Z}_i^* C D_i  
    \end{bmatrix},
\end{align*}
with the $j$th block 
\begin{align*}
     C D_j - \sum_{i=1}^s C D_j \tilde{Z}_i^* C D_i 
     &= C D_j - \left(  C D_j \tilde{Z}_j^* C D_j + \sum_{\substack{i=1\\ i\neq j}}^s C D_j \tilde{Z}_i^* C D_i  \right) \\
     &= - \left( \sum_{\substack{i=1\\ i\neq j}}^s \left(  C \tilde{Z}_i^* \left( D_j  C D_i -  D_i  C Dj \right)  \right) \right).
\end{align*}
The rank depends on $D_j  C D_i -  D_i  C D_j$.
The diagonals of $D_i$ only differ by a constant factor, thus we may proceed with $D_1 C D_2 - D_2 C D_1$:
\begin{align*}
    D_2 C D_1 - D_1 C D_2 
    &= F^* (A^{\rm c}_2 E_1 R_1 A^{\rm c}_1 - A^{\rm c}_1  E_2 R_2 A^{\rm c}_2 ) F.
\end{align*}
We define $Q = A^{\rm c}_2 E_1 R_1 A^{\rm c}_1 - A^{\rm c}_1  E_2 R_2  A^{\rm c}_2$. Without loss of generality, the operator $E_i R_i$ selects the $N_1$-th row until the $N_2$-th row of $A^{\rm c}_i$ and sets the other rows to zeros, with $1 \leq N_1 < N_2 \leq N$, which corresponds to selecting the rows according to the subset of centers in $[-1,1]$. Now we consider the properties of matrix $Q$,
\begin{align*}
    Q_{i,j}  
    = \sum_{k=N_1}^{N_2} \left[ \phi_k \left( c_i + 2T/L \right)  \phi_j \left( c_k \right) - \phi_k \left( c_i \right)   \phi_j \left( c_k + 2T/L \right) \right] .
\end{align*}
Recall that the centers $ \lbrace c_j \rbrace _{j=1}^N $ are defined in~\eqref{eq:set_centers} and $2T/L$ is the distance between two neighboring grid points.

At this point, it becomes clear that the numerical support of the Gaussian RBFs determines the numerical rank of the system. The function value $\phi_i (c_j) = {\rm e}^{-\varepsilon^2 (c_j - c_i)^2}  = {\rm e}^{ - \frac{4 \varepsilon^2 T^2}{N^2} (j-i)^2 }$ decays when the difference of $i$ and $j$ becomes larger. Since $\varepsilon T/N$ is a constant, the value of $\phi_i (c_j) $ becomes smaller than the threshold $\delta$ as soon as
\[
\left| j-i  \right| = \sqrt{-2\log \delta \log (1+\tau_0^{-2})}/\pi := W.
\]
Thus the elements in $Q$ are significant (with respect to the threshold) only when they are near the diagonals within bandwidth $W$.

For the elements in the middle square of the matrix, $i \in [N_1 + W +1, N_2 - W]$ or $j \in [N_1 + W +1, N_2 - W]$, the elements of $Q$ are all smaller than the threshold. 
Because
\[
Q_{i,j} = \sum_{k=i- W}^{j + W} \left[  (A_2^{\rm c})_{i,k} (A_1^{\rm c})_{k,j} -  (A_1^{\rm c})_{i,k} (A_2^{\rm c})_{k,j}  \right],
\]
the elements in the brackets are odd symmetric with respect to $k = (i+j)/2$, which means they sum to zero. This symmetry argument can not be used near the boundary of the matrix $Q$.

As a result, there are only two non-negligible subblocks of $Q$ on the northwest and southeast corners and they both have size $2W$-by-$2W$. Thus the rank of $B^{\rm e} - B^{\rm e} Z^* B^{\rm e}$ is at most $4W$.
\end{proof}

\section{2D problems}
\label{ss:2d_fa}
The extension of the univariate scheme to problems in two dimensions is reasonably straightforward. We focus on the approximation on a domain $\Omega$ that can be embedded into a bounding box
\[
\Omega \subset \Omega_{\rm B}= [-T^x, T^x] \times [-T^y, T^y].
\]
We consider periodized RBFs on the box to approximate a function on $\Omega$. An important feature is that we do not assume $\Omega$ itself to be rectangular. Its shape can be arbitrary.

Radial basis functions in 2D are typically defined in terms of radial distances. The Gaussian RBF is unique in the property that its radial 2D generalization has tensor product structure, $\phi(\| \mathbf{x} - \mathbf{y} \|) = \phi( \| x_1 - y_1 \| )  \phi( \| x_2 - y_2 \| )$. For this reason, we restrict this section to the Gaussian RBF.

\subsection{Periodic approximation problem and efficient solver}
\label{ss:alg_per_2d}

The periodic approximation problem is defined on the box $\Omega_{\rm B}$ and it has full product structure, which leads to similar formulas as in the 1D case. Let us establish notations. We are led to solve the linear system
\begin{equation}\label{eq:circulant_system_2d}
    A^{\rm c} \mb{a}^{\rm c} = \mb{b}^{\rm c},
\end{equation}
in which $A^{\rm c} = A^{{\rm c}x} \otimes  A^{{\rm c}y}$ is the product of two block-column circulant matrices. The latter is based on using centers $C^x \times C^y$, with $N^x$ and $N^y$ equispaced centers in the $x$ and $y$ direction on $\Omega_B$ respectively. The sampling points are $\tilde{X} \times \tilde{Y}$, with $L^x = s^x N^x$ and $L^y = s^y N^y$ points in the $x$ and $y$ directions. We call the integers $s^x$ and $s^y$ the \emph{oversampling factors}.

This results in an approximation of the form
\begin{equation}
    f(x,y) \approx \sum_{j=1}^{N_x N_y} a_j \phi_j(x,y),
\end{equation}
in which the basis functions $\phi_j (x,y) = \phi_m^x (x) \phi_n^y (y)$ are products of univariate periodized functions, with $j = (m-1)N^y +n$ for $m=1,\cdots, N^x$ and $n = 1,\cdots, N^y$. These univariate functions are associated with the centers in $C^x \times C^y$,
\[
 \phi^x_m(x) = \phi^{\rm per}( \lVert	x - c_m^x \rVert ), \quad  \phi^y_n(y) = \phi^{\rm per}( \lVert y - c_n^y \rVert ).
\]

Similar to the univariate problem, we block-diagonalize the system. With block-diagonal matrices $P^x$ and $P^y$, with DFT matrices $F^x$ and $F^y$ along their diagonals, we obtain
\[
 (P^x \otimes P^y) A^{\rm c} \mb{a}^{\rm c} = (P^x \otimes P^y) \mb{b}^{\rm c}.
\]
Letting $\mb{y}^{\rm c} = (F^x \otimes F^y)^* \mb{a}^{\rm c}$, and hence $\mb{a}^{\rm c} = (F^x \otimes F^y) \mb{y}^{\rm c}$, we arrive at
\begin{equation}\label{eq:blockdiagonal_system_2d}
 B \mb{y}^{\rm c} = \mb{c}^{\rm c},
\end{equation}
with $\mb{c}^{\rm c} = (P^x \otimes P^y) \mb{b}^{\rm c}$ and
\[
B = (P^x \otimes P^y)  A^{\rm c} (F^x \otimes F^y) = (P^x A^{{\rm c}x} F^x ) \otimes (P^y A^{{\rm c}y} F^{y} ) =  B^x \otimes B^y.
\]
Note that $F^x \otimes F^y$ is exactly a 2D FFT. The least squares system~\eqref{eq:blockdiagonal_system_2d} is solved by $\mb{y}^{\rm c} = B^\dagger\mb{c}^{\rm c}$. The pseudo-inverse of $B$ also has product structure, namely
\begin{equation}\label{eq:B_pinv_2d}
    B^\dagger = B^{x\dagger} \otimes B^{y\dagger}  = ((B^{x*} B^x)^{-1} B^{x*}) \otimes ((B^{y*} B^y)^{-1} B^{y*}).
\end{equation}

\subsection{Non-periodic approximation problem and AZ algorithm}
The domain $\Omega \subset \Omega_{\rm B}$ may have any shape. Thus, we choose from the grid points $\tilde{X}  \times \tilde{Y}$ the \emph{collocation points} as the subset in $\Omega$,
\begin{equation}\label{eq:set_X_2d}
 X = (\tilde{X}  \times \tilde{Y}) \cap \Omega,
\end{equation}
and this no longer corresponds to a Cartesian grid. We denote the cardinality of the set by $M = \#X$. The oversampling factors $s^x$ and $s^y$ are chosen large enough such that $M > N^x N^y$. We have
\[
M \approx \frac{ L^x L^y S(\Omega)}{S(\Omega_{\rm B})} =  \frac{ s^x s^y N^x N^y S(\Omega)}{S(\Omega_{\rm B})},
\]
so that it is sufficient that $\displaystyle s^x s^y > S(\Omega_{\rm B})/S(\Omega)$.

The discrete least squares approximation problem leads to the linear system
\begin{equation}\label{eq:Aa_is_b_2d}
    A \mb{a} = \mb{b},
\end{equation}
with rectangular matrix $A \in \mathbb{R}^{M \times N^x N^y}$ and with $\mb{b}_i = f(x_i, y_i)$, $i=1, \ldots, M$.
Similar to the univariate case, it is embedded within the block-circulant\footnote{We say block-circulant as a shorthand: more precisely, the matrix represents the tensor product of two block-column circulant matrices.} matrix $A^{\rm c}$. Denote by $R \in \mathbb{R}^{M \times L^x L^y}$ the corresponding restriction of the rows, i.e., such that $A = RA^{\rm c}$, and denote the corresponding extension operator by $E$. 

With this construction, we can still use Algorithm~\ref{alg: az_1d} to solve \eqref{eq:Aa_is_b_2d}. Starting from decomposing matrix $A$
\[
A = R A^{\rm c} = R (P^x \otimes P^y)^* B (F^x \otimes F^y)^*,
\]
and intuitively choosing 
\[
Z^* = (F^x \otimes F^y) B^\dagger (P^x \otimes P^y) E,
\]
the left hand side of the first step in AZ algorithm becomes
\begin{align*}
    A-AZ^*A = R  &(P^x \otimes P^y)^* B  (F^x \otimes F^y)^* - \\ 
    & R  (P^x \otimes P^y)^* B  B^\dagger  (P^x \otimes P^y) E R  (P^x \otimes P^y)^* B  (F^x \otimes F^y)^*,
\end{align*}
with rank much lower than $A$.

\subsection{Efficient AZ algorithm}
\label{ss:alg_eff_2d}
The implementation of the matrix-vector products in the 2D case is of course somewhat more involved than in 1D, but conceptually similar. Note that all operators have product structure, except for the extension and restriction operators $E$ and $R$.

We can further unify the notations with the 1D case using the matrices $P = P^x \otimes P^y$ and $F = F^x \otimes F^y$, and the constants $N = N^x N^y$, $s = s^x s^y$ and $L = L^x L^y = s^x s^y N^x N^y = s N$.

We focus on two aspects that are specific to 2D.

\subsubsection{The choice of $\tau_0$}
As discussed in Lemma~\ref{theo:tau}, the maximal entry of $Z^*$ for 1D problem is $\mathcal{O}(\tau_0^{-1})$ and the maximal entry of $Z^*$ for 2D problem is $\mathcal{O}(\tau_0^{-2})$. However, the influence of $\lVert Z^* \rVert $ remains bounded if  $\eta \sim \tau_0^2$. 

Thus we can choose a larger $\tau_0$ for 2D problem, maybe square root of $\tau_0$ for 1D problem. For example, in the numerical experiments, we choose $\tau_0 =$ 1e-10 for 1D problems and $\tau_0 = $ 1e-5 for 2D problems.

\subsubsection{Rank of the system in step 1}
The rank of the system in step 1 of AZ algorithm was shown to be essentially due to the number of basis functions that overlap with the boundary in the proof of Theorem~\ref{theo:rank}. In 2D, that set is much larger. With $N = N^x N^y$ degrees of freedom, one may generically expect ${\mathcal O}(\sqrt{N})$ basis functions overlapping with the boundary.

That result agrees with the complexity of the rank in the case of 2D Fourier extensions~\cite{matthysen2017fastfe2d} and approximations based on splines~\cite{coppe2022splines}. Since the computational cost of the solver in step 1 scales as ${\mathcal O}(rsN \log N)$ + ${\mathcal O}(M r^2)$ with  $M = {\mathcal O}(N)$ and $r = {\mathcal O}(\sqrt{N})$, the solver has ${\mathcal O}(N^2)$ complexity. While this is better than the cubic cost of a direct solver for the least squares problem, unfortunately, the relative gain of AZ in 2D is smaller than it is in 1D.

\subsection{Numerical results}
Fig.~\ref{fig:d2_fa_per} shows the results of a periodic problem, approximating function $f(x, y) = \sin \left( \left[ \frac{N}{10} \right] \pi (x+y) \right) $ in $\Omega_{\rm B}$ with $T^x = T^y=1$, $s^x = s^y=2$ and $\tau_0 = $1e-5. We can see from the figures that the maximum error of MATLAB's backslash and efficient solver converges to around 1e-12, but the efficient solver is much faster than using backslash. The coefficient norm keeps below 10, normalized by $\sqrt{N}$, which means both solvers are stable.

The left panel in Fig.~\ref{fig:d2_fa_rank} shows the rank of matrices $A$ (blue circle line) and $A-AZ^*A$ (red star line) for non-periodic problems. Since the rank of  $A-AZ^*A$ increases as $\mathcal{O}(\sqrt{N})$, the yellow solid line shows $22 \sqrt{N} - 120 $, which we choose as the column size of randomized matrix. The right panel shows the singular values of matrices $A$ and $A-AZ^*A$ when $N=2500$.

Fig.~\ref{fig:d2_fa_nonper} shows the results of a non-periodic problem, approximating function $f(x,y) = \sin ( \frac{N^x}{10} x+ \frac{N^y}{10} y)$, in the domain $\Omega = \{ (x,y) | x^2 + 4 y^2  \leq 1 \}$,  with $s^x = s^y = 2$, $T^x =1.4$, $  T^y = 0.7$, and $\tau_0 = $1e-5. 
The left panel shows the solution of AZ algorithm on the ellipse. In this panel, the $N^x$ is chosen to be 100 and $N^y$ is chosen to be 50. The $L^\infty$ error of function value is 4.0727e-07. 
The right panel shows the calculating time. 
The blue star line and red solid line  show the results of different solvers for the approximation problem. The maximum error of these two solvers all converges to around 1e-7. AZ algorithm does gain some efficiency compared to using backslash when $N$ is larger than 2500. The coefficient norm keeps around 20, normalized by $\sqrt{N}$, indicating that the solvers are stable.

\begin{figure}[H]
\centering
\includegraphics[width=\textwidth]{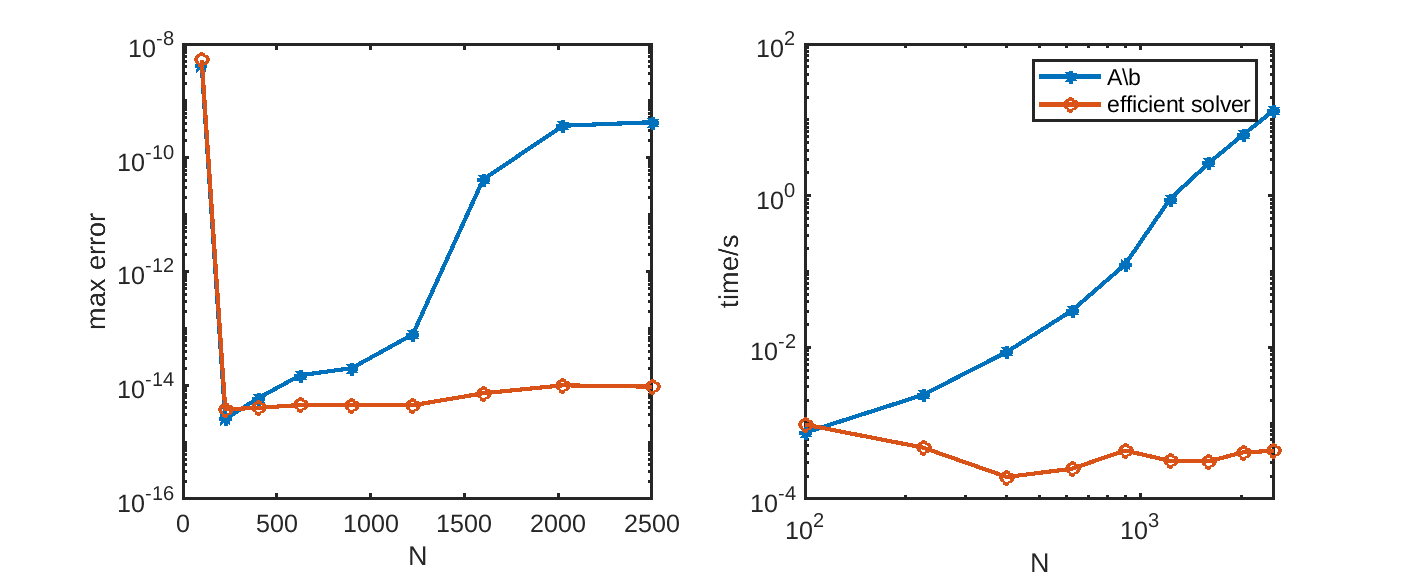}
\caption{Periodic function approximation: $f(x,y) = \sin \left( \left[ \frac{N}{10} \right] \pi (x+y) \right) $ in $\Omega_{\rm B}$ , $T^x = T^y=1$,  $s^x = s^y=2$. Left: max error. Right: calculating time.  Blue star line: backslash in MATLAB. Red circle line: efficient solver.}
\label{fig:d2_fa_per}
\end{figure}

\begin{figure}[H]
\centering
\includegraphics[width=\textwidth]{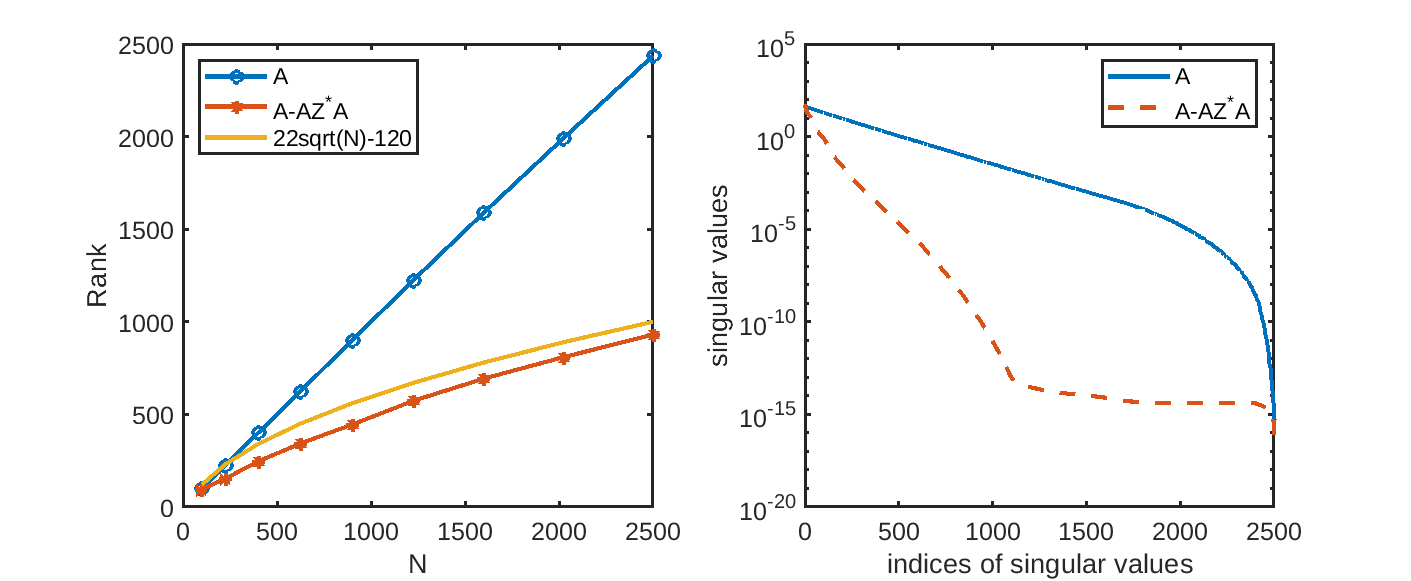}
\caption{ Left: rank of matrices $A$ and $A - A Z^* A$ for 2D non-periodic function approximation problem. Right: singular values of these two matrices for $N^x = N^y = 50$.}
\label{fig:d2_fa_rank}
\end{figure}

\begin{figure}[H]
\centering
\includegraphics[width=\textwidth]{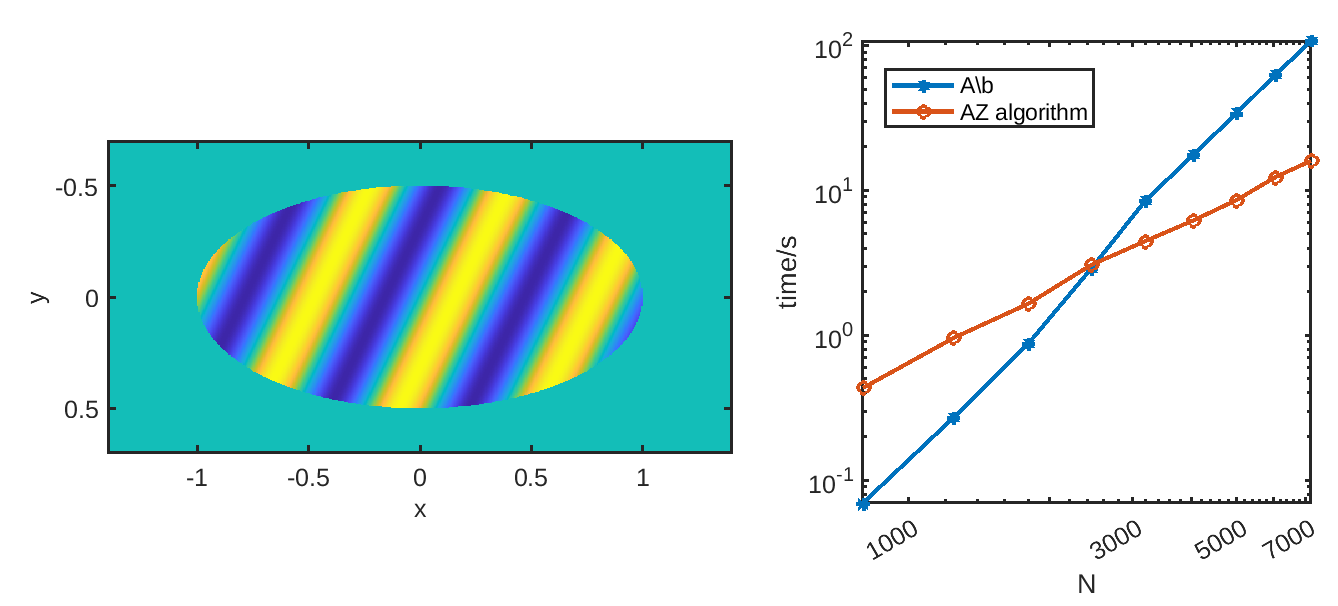}
\caption{Non-periodic function approximation on an ellipse: $f(x,y) = \sin ( \frac{N^x}{10} x+ \frac{N^y}{10} y)$, $T^x =1.4$, $  T^y = 0.7$, $s^x=s^y=2$. Left: values of the function on the ellipse. Right: calculating time. Blue star line: backslash in MATLAB. Red circle line: AZ algorithm.}
\label{fig:d2_fa_nonper}
\end{figure}

\section{Boundary value problems}
\label{ss:bvp}
The solution of boundary value problems is far more involved than the approximation of functions. In this section we explore the possibility of using the same AZ algorithm in unmodified form, by viewing the combination of a differential equation with boundary conditions as a low-rank perturbation of an approximation problem involving translated kernels.

\subsection{An ordinary differential equation}
\subsubsection{Formulation as a least squares problem}
We write an ordinary differential equation (ODE) on $[-1,1]$ as follows,
\begin{equation}\label{eq:ODE}
    \begin{cases}
        \mathcal{L} f(x) = g(x), \quad  x\in \Omega=[-1,1], \\
        \mathcal{B} f(x) = h(x), \quad x \in \partial\Omega = \{-1,1\}.
    \end{cases}
\end{equation}
Here, $\mathcal{L}$ and $\mathcal{B}$ are differential operators, with the latter representing a boundary condition, and the functions $g(x)$ and $h(x)$ are given.
We set out to solve the problem using a least squares collocation approach. This results in a block matrix, in which a block corresponds to (weakly) enforcing the differential equation at the collocation points, and another block corresponds to (weakly) enforcing the boundary conditions.

We select $M$ collocation points $X$ and $N$ basis functions $\{ \phi_k \}_{k=1}^N$ exactly as in the approximation problems of this paper, and denote the endpoints by the set $X^{\rm b} := \{  -1,1 \}$. The discrete least-squares approximation problem for~\eqref{eq:ODE} results in the rectangular system
\[
 \begin{bmatrix} A^{\rm i} \\ A^{\rm b} \end{bmatrix} \mathbf{a} = \begin{bmatrix} \mb{b}^{\rm i} \\ \mb{b}^{\rm b} \end{bmatrix},
\]
with $A^{\rm i}_{i j} = \mathcal{L} \phi_j(  x_i)$ and $A^{\rm b}_{i j} = \mathcal{B} \phi_j(x^{\rm b}_i)$. The right hand side corresponds to the inhomogeneous term and the boundary condition respectively, i.e., $\mb{b}^{\rm i}_i = g( x_i)$ and $\mb{b}^{\rm b}_i = h( x^{\rm b}_i)$.
This is an approximation problem with basis functions $\{ \mathcal{L} \phi_j \}$.

\begin{example}
\label{eg:ode}
As a simple illustration, we consider the one-dimensional Helmholtz equation $u'' + k^2 u = 0$ with wavenumber $k$ and boundary conditions $u(\pm 1) = \sin(\pm N/5)$. The analytical solution is $u(x) = \sin (N x /5)$. In this case, $A^{\rm i}$, the upper $M \times N$ subblock of the least squares matrix $A$ has entries
\[
A_{i j} = \phi_j '' (x_i) + k^2 \phi_j(x_i), \quad i = 1,2,\cdots,M, \quad j=1,2,\cdots,N.
\]
The Gaussian RBF~\eqref{eq:rbf} has second derivative
\begin{equation*}
    \phi''(r) = -2 \varepsilon^2 \exp ( -\varepsilon^2 r^2) (1 - 2\varepsilon^2 r^2).
\end{equation*}
In order to avoid large matrix entries in $A$, we normalize $A^{\rm i}$ with the factor $-2 \varepsilon^2$, and do the same operation with $\mb{b}^{\rm i}$.
\end{example}

Computational results for this example problem are given in the next section.

\subsubsection{AZ algorithm}

The first block $A^{\rm i}$ of the block matrix $A$ has the same structure as an approximation problem with translated kernels, the kernels being given by $\mathcal{L} \phi$. The boundary condition results in a small number of additional rows, which we treat as a low-rank perturbation of the approximation problem. Hence, with minor modification to $Z^*$ to account for the different dimension, we can apply the AZ algorithm.

Since $A = \begin{bmatrix} A^{\rm i} \\ A^{\rm b} \end{bmatrix} = \begin{bmatrix} R P^* B F^* \\ A^{\rm b} \end{bmatrix} $, we choose the matrix $Z^*$ with two additional zero columns (the same as the number of boundary points):
\[
Z^* = 
\begin{bmatrix}
    F B^\dagger P E & \mb{0}_{N \times 2}
\end{bmatrix}.
\]
Here $F$, $P$, $B^\dagger$ and $E$  have the same definition as in Sec.~\ref{ss:eff_alg_1d}.

Somewhat surprisingly, this is the only modification to the AZ algorithm necessary to solve ODE's such as the one of Example~\ref{eg:ode}.

\subsection{An elliptic boundary value problem in 2D}
\subsubsection{Least-squares approximation}
\label{ss:pbl-pde-2d}
The approach for a bivariate boundary value problem, involving a partial differential equation (PDE) rather than an ODE, is entirely similar but requires more boundary conditions.

Consider a 2D boundary value problem involving the Helmholtz equation:
\[
    \Delta u(x,y) + k_0^2 u(x,y) =    \frac{\partial^2 u}{\partial  x^2} +   \frac{\partial^2 u}{\partial  y^2} + k_0^2 u(x,y) = 0, \qquad (x,y) \in \Omega,
\]
in which the domain $\Omega \subseteq \Omega_{\rm B} = [-T^x, T^x] \times [-T^y, T^y]$ is embedded in a rectangle, and with Dirichlet boundary conditions $u(x,y) = g(x,y)$ on $(x,y) \in \Gamma = \partial \Omega$. We can also choose other types of boundary conditions without changes to the methodology. Here we choose a Dirichlet boundary condition for simplicity.

Similar to the 2D approximation problem we choose $M$ collocation points in $\Omega$ as a subset of an equispaced grid on the covering rectangle. In addition, we choose $M^{\rm b}$ points along the boundary. A discrete least squares systems follows by enforcing the Helmholtz equation to hold on the grid, and the boundary condition to hold at the boundary points. This leads to $A \mb{a} = \mb{b}$ with $A = \begin{bmatrix}
A^{\rm i} \\ A^{\rm b} \end{bmatrix} $ consisting of two subblocks, $A^{\rm i} \in \mathbb{R}^{M \times N^x N^y}$ for collocation points and $A^{\rm b} \in \mathbb{R}^{M^{\rm b} \times N^x N^y}$ for boundary points. This system is structurally similar to that of the ODE, but the block $A^{\rm b}$ has larger dimension since there are more boundary points.

The entries of the blocks are given in this case by
\begin{gather}
A^{\rm i}_{i,:} = 
    \begin{bmatrix}  {\phi_1^x}'' (x_i) & {\phi_2 ^x}'' (x_i) & \cdots & {\phi_{N^x}^x}'' (x_i)    \end{bmatrix}
     \otimes
    \begin{bmatrix}  \phi_1^y (y_i) & \phi_2 ^y (y_i) & \cdots & \phi_{N^y}^y (y_i)    \end{bmatrix} \label{eq:Ai_2d_bvp}\\
     +
    \begin{bmatrix}  \phi_1^x (x_i) & \phi_2 ^x (x_i) & \cdots & \phi_{N^x}^x (x_i)    \end{bmatrix}
    \otimes
    \begin{bmatrix}  {\phi_1^y}'' (y_i) & {\phi_2 ^y}'' (y_i) & \cdots {\phi_{N^y}^y}''  (y_i)    \end{bmatrix} \nonumber\\
     + k_0^2
    \begin{bmatrix}  \phi_1^x (x_i) & \phi_2 ^x (x_i) & \cdots & \phi_{N^x}^x (x_i)    \end{bmatrix}
    \otimes
    \begin{bmatrix}  \phi_1^y (y_i) & \phi_2 ^y (y_i) & \cdots & \phi_{N^y}^y (y_i)    \end{bmatrix}, \nonumber \\
    i = 1, \cdots, M. \nonumber
\end{gather}
and
\begin{gather*}
A^{\rm b}_{i,:} = 
    \begin{bmatrix}  \phi_1^x (x_i) & \phi_2 ^x (x_i) & \cdots & \phi_{N^x}^x (x_i)    \end{bmatrix}
    \otimes
    \begin{bmatrix}  \phi_1^y (y_i) & \phi_2 ^y (y_i) & \cdots & \phi_{N^y}^y (y_i)    \end{bmatrix}, \\
    i = 1, \cdots, M^{\rm b}.
\end{gather*}

The right hand side vector similarly has block structure: $\mb{b} = \begin{bmatrix} \mb{b}^{\rm i} \\ \mb{b}^{\rm b} \end{bmatrix} $ has two sub-vectors, namely $\mb{b}^{\rm i} = \mb{0} \in \mathbb{R}^{M}$ for collocation points and $\mb{b}^{\rm b} \in \mathbb{R}^{M^{\rm b}}$ for boundary points, with $\mb{b}^{\rm b}_i = g(x_i,y_i)$.

\subsubsection{AZ algorithm}
The AZ algorithm is the same as it is for the ODE case, but it requires more motivation. 

We first look at the structure of matrix $A^{\rm i}$, which can be written as
\[
A^{\rm i} = R A^{\rm ic},
\]
where $R$ is the matrix to select rows according to the collocation points, and $A^{\rm ic} \in \mathbb{R}^{s^x s^y N^x N^y \times N^x N^y}$ consists of evaluations at the full set of equispaced collocation points in the larger box $\Omega_{\rm B}$. The entries of $A^{\rm ic}$ are given in this case by
\begin{gather*}
A^{\rm ic}_{i,:} = 
    \begin{bmatrix}  {\phi_1^x}'' (x_i) & {\phi_2 ^x}'' (x_i) & \cdots & {\phi_{N^x}^x}'' (x_i)    \end{bmatrix}
     \otimes
    \begin{bmatrix}  \phi_1^y (y_i) & \phi_2 ^y (y_i) & \cdots & \phi_{N^y}^y (y_i)    \end{bmatrix}   \\
     +
    \begin{bmatrix}  \phi_1^x (x_i) & \phi_2 ^x (x_i) & \cdots & \phi_{N^x}^x (x_i)    \end{bmatrix}
    \otimes
    \begin{bmatrix}  {\phi_1^y}'' (y_i) & {\phi_2 ^y}'' (y_i) & \cdots {\phi_{N^y}^y}''  (y_i)    \end{bmatrix} \\
     + k_0^2
    \begin{bmatrix}  \phi_1^x (x_i) & \phi_2 ^x (x_i) & \cdots & \phi_{N^x}^x (x_i)    \end{bmatrix}
    \otimes
    \begin{bmatrix}  \phi_1^y (y_i) & \phi_2 ^y (y_i) & \cdots & \phi_{N^y}^y (y_i)    \end{bmatrix},   \\
    i = 1, \cdots, s^x s^y N^x N^y. 
\end{gather*}
Thus the matrix can be written as the summation of three tensor products
\[
A^{\rm ic} = A^{x {\rm ic}}_2 \otimes A^{y {\rm ic}}_0 + A^{x {\rm ic}}_0 \otimes A^{y {\rm ic}}_2 + k_0^2 A^{x {\rm ic}}_0 \otimes A^{y {\rm ic}}_0,
\]
where the superscripts $x$ and $y$ indicate the basis function and the subscripts show their order of derivative. This structure is more complicated than that of the 2D approximation problem. However, all three matrices can be simultaneously brought into block-column diagonal form:
\begin{align*}
B &= (P^{x} \otimes P^{y}) A^{\rm ic} (F^x \otimes F^y) \\
&= (P^x A^{x {\rm ic}}_2 F^x) \otimes (P^y A^{y {\rm ic}}_0 F^y) + 
(P^x A^{x {\rm ic}}_0 F^x) \otimes (P^y A^{y {\rm ic}}_2 F^y) \\
& \qquad + k_0^2 (P^x A^{x {\rm ic}}_0 F^x) \otimes (P^y A^{y {\rm ic}}_0 F^y) \\
&= B_2^x \otimes B_0^y + B_0^x \otimes B_2^y + k_0^2 B_0^x \otimes B_0^y.
\end{align*}
Although the matrix $B$ itself can not be written in the form of a tensor product, it is highly sparse and its structure enables efficient computations. The pseudo-inverse is
\[
B^\dagger = (B^* B)^{-1} B^*.
\]
Here, $B^* B$ is a diagonal matrix, of which the diagonal can be computed from those of the blocks in the three components above by straightforward calculation. 

The formulation for solving the PDEs using the AZ algorithm is entirely similar to the one for solving ODEs: we simply add zero columns to $Z^*$ to account for the increased dimension of $A^{\rm b}$. Since $A^{\rm i}$ can be decomposed into $A^{\rm i}= R P^* B F^*$, we define matrix $Z^*$ as 
\[
Z^* = 
\begin{bmatrix}
    F B^\dagger P E     &  \mb{0}_{N \times M^{\rm b} }
\end{bmatrix}.
\]
We make two additional comments.
\begin{itemize}
\item First, we observe that the boundary conditions result in a larger block of the system, compared to the two rows of the ODE. Since we treat that block as a low-rank perturbation, it will increase the rank of the first step of the AZ algorithm. Thus, the AZ algorithm will be less efficient. However, that was already the case with 2D approximations versus 1D approximations and the difference is only a constant factor, since one can expect the size of the boundary block to be comparable to the rank of the plunge region of the 2D approximation problem (which itself scales with the length of the boundary).

\item Second, it is not trivial to see why the AZ algorithm works at all with the same choice of $Z$ as for approximation problems. Indeed, the expression~\eqref{eq:Ai_2d_bvp} of the matrix entries shows that we no longer have an approximation problem involving shifts of a single kernel function. Instead, we see combinations of kernels involving the derivatives of $\phi$ in the $x$ and $y$ directions. However, as our derivation above shows, the matrix $A^{\rm i}$ is the sum of matrices that do have the structure of a 2D approximation problem. Thus, each of the three terms in~\eqref{eq:Ai_2d_bvp} results in a matrix that can be embedded in a larger tensor product of block-column circulant matrices. This implies that their sum, the matrix $A^{\rm i}$ itself, is still embedded in a larger matrix with special structure. Hence, we can define $Z^*$ in exactly the same way as before.
\end{itemize}

\subsection{Numerical results}
Fig.~\ref{fig:d1_ode_nonper} shows the results of the proposed modification of the AZ algorithm to Example~\ref{eg:ode}, with $T=1.5$ and $s=2$. The maximum errors in both cases is below $1e-8$. For reasons discussed in Sec.~\ref{ss:alg_eff_1d}, the computational complexity of the AZ algorithm is $\mathcal{O}(N \log N)$, which is near optimal and a significant improvement compared to a direct dense solver. Finally, the coefficient norms in both cases remain moderate, which indicates that the solver is stable and accurate.

Fig.~\ref{fig:d2_pde_nonper} shows the results of solving a two-dimensional boundary value problem involving the Helmholtz equation as described in Sec.~\ref{ss:pbl-pde-2d}, with the circular domain $\Omega = \{ (x,y) | x^2 + y^2  \leq 1 \}$, $T^x = T^y = 1.5$, $s^x = s^y = 2$ and using $100$ equispaced boundary points on $\partial\Omega$. Here, too, the coefficient norms remain moderate, indicating that the solvers are stable (not shown).

\begin{figure}[H]
\centering
\includegraphics[width=\textwidth]{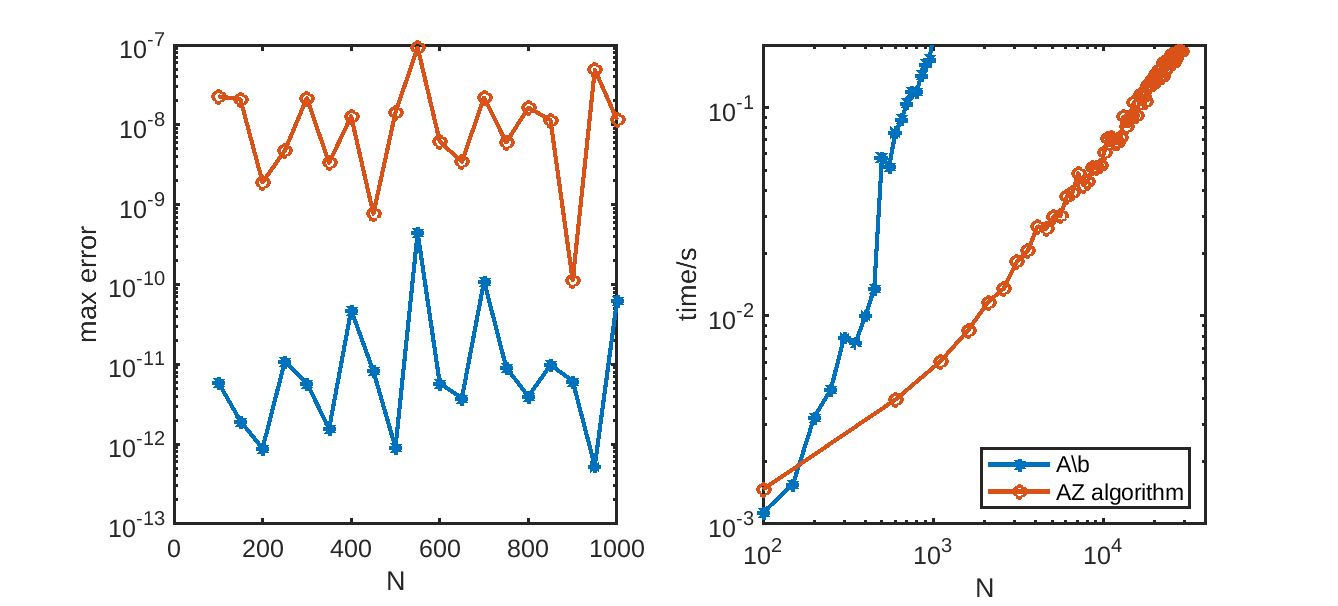}
\caption{Solving ODE $u''+k^2 u =0$ on $[-1,1]$, with solution $u(x) = \sin \left(  \frac{N}{5} x \right) $, $T=1.5$, $s=2$, $\tau_0 = 10^{-10}$. Left: maximum error on $[-1,1]$ as a function of $N$. Right: computing time.  Blue star line: backslash in MATLAB. Red circle line: AZ algorithm.}\label{fig:d1_ode_nonper}
\end{figure}

\begin{figure}[H]
\centering
\includegraphics[width=\textwidth]{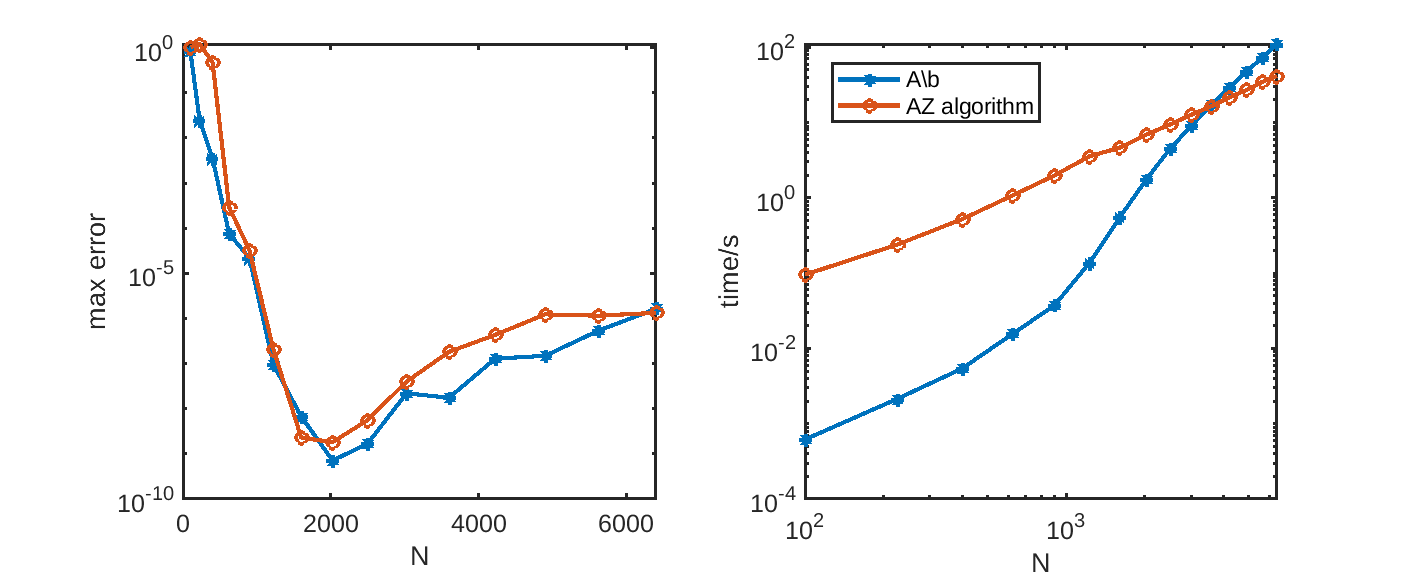}
\caption{Solving PDE: $\nabla^2 u(x,y) + k_0^2 u(x,y) =0$ in the unit circle, with solution $u(x) = \sin (2x+3y) $, $k^2_0 = 13$, $T^x = T^y=1.5$, $s^x = s^y=2$, $\tau_0 = 10^{-5}$. Left: max error. Right: calculating time.  Blue star line: backslash in MATLAB. Red circle line: AZ algorithm.}\label{fig:d2_pde_nonper}
\end{figure}

As another example, we solve the Helmholtz equation with different boundary conditions
\begin{gather*}
    \nabla^2 u(x,y)  + 4 u(x,y) = \exp{ \left( -4((x+0.3)^2 + y^2)^2 \right)}, \quad  (x,y) \in \Omega, \\
    \frac{\partial u(x,y)}{\partial \mathbf{n}} = 0,  \quad  (x,y) \in \partial\Omega_1, \\
    u(x,y) = 0,  \quad  (x,y) \in \partial\Omega_2.
\end{gather*}
We choose $\Omega$ to be a two-dimensional smooth flower-shaped domain with a circle of diameter $0.2$ cut out, with homogeneous Neumann boundary conditions on the outer boundary $ \partial\Omega_1$ and homogeneous Dirichlet boundary conditions on the inner boundary $ \partial\Omega_2$.

The solution calculated by AZ algorithm is shown in Fig.~\ref{fig:d2_pde_flower}. The approximation uses $100 \times 100$ basis functions on $[-1,1]\times[-1,1]$ grid (slightly larger than the flower) with oversampling factor 2 in both dimensions. For this experiment we have selected $200$ and $100$ points on the outer and inner boundary respectively. The computation time for this experiment using AZ was $87\,{\rm s}$, which is approximately three times faster than $245\,{\rm s}$ when using backslash in MATLAB for the least squares problem. The dimension of $A$ in this case is $15071 \times 10000$.

\begin{figure}[H]
    \centering
    \includegraphics[width=\textwidth]{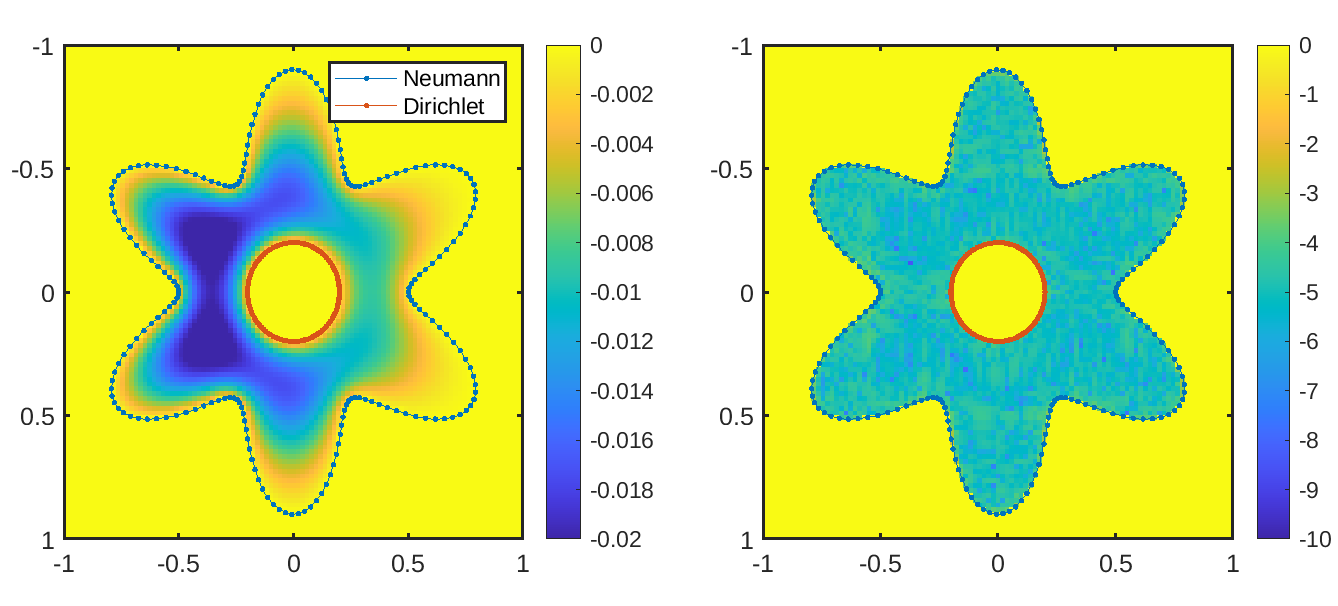}
    \caption{Left: solution of the Helmholtz equation on a flower-shaped domain. Right: absolute error in logarithmic scale at 100 $\times$ 100 equispaced points on $[-1,1]\times[-1,1]$.}
    \label{fig:d2_pde_flower}
\end{figure}

It is known in the solution of approximation problems involving ill-conditioned systems that the accuracy of the solution is affected by the size of the coefficient norm (recall \S\ref{ss:theo} and see~\cite{adcock2022rbf_frames}). We note that in this final example the solution computed by Matlab has a coefficient norm of $5.5e4$, whereas that of the AZ computed solution was around $59$.

\section{Concluding remarks}
\label{ss:con}
The theme of this article has been to use periodized Gaussian radial basis functions for function approximation and to devise an efficient algorithm to do so. The method is extended to solve boundary value problems on irregular (i.e., non-cartesian) domains.

The efficient solver is based on the properties of block-column circulant matrices and low-rank perturbations thereof. However, a number of open problems remain for future research:
\begin{itemize}
    \item For two-dimensional problems, the rank of the perturbation scales as $\mathcal{O}(\sqrt{N})$, and this leads to $\mathcal{O}(N^2)$ overall complexity. While this improves upon a direct solver and other spectral methods, it is a relevant problem to investigate further complexity improvements. To that end, one could investigate the structure of the approximation problem in step 1 of the AZ algorithm, which is itself a univariate problem along the boundary of the domain.
    \item The computational results in 2D have been limited to the Gaussian RBF, since among all popular RBFs only the Gaussian RBF can be written as the product of univariate functions. Can the method be extended to other RBFs?
    \item The efficient solver for boundary value problems in this article only applies to problems involving a constant coefficient differential operator. Can the AZ algorithm be used for boundary value problems with non-constant coefficients? In this case one may need a different methodology to find a suitable $Z$ matrix.
\end{itemize}

\bibliographystyle{abbrv}
\bibliography{references}

\begin{thebibliography}{10}

\bibitem{adcock2019frames}
B.~Adcock and D.~Huybrechs.
\newblock Frames and numerical approximation.
\newblock {\em SIAM Rev.}, 61(3):443--473, 2019.

\bibitem{adcock2020frames}
B.~Adcock and D.~Huybrechs.
\newblock Frames and numerical approximation {II}: generalized sampling.
\newblock {\em J. Fourier Anal. Appl.}, 26(6):1--34, 2020.

\bibitem{adcock2022rbf_frames}
B.~Adcock, D.~Huybrechs, and C.~Piret.
\newblock Stable and accurate least squares radial basis function
  approximations on bounded domains.
\newblock Technical Report arXiv:2211.12598, 2022.

\bibitem{babuvska1973finite}
I.~Babu{\v{s}}ka.
\newblock The finite element method with {L}agrangian multipliers.
\newblock {\em Numer. Math.}, 20(3):179--192, 1973.

\bibitem{beatson2001fast}
R.~K. Beatson, W.~A. Light, and S.~Billings.
\newblock Fast solution of the radial basis function interpolation equations:
  Domain decomposition methods.
\newblock {\em {SIAM} J. Sci. Comput.}, 22(5):1717--1740, 2001.

\bibitem{ben2003generalized}
A.~Ben-Israel and T.~N. Greville.
\newblock {\em Generalized inverses: theory and applications}, volume~15.
\newblock Springer Science \& Business Media, 2003.

\bibitem{biancolini2017fast}
M.~E. Biancolini.
\newblock {\em Fast radial basis functions for engineering applications}.
\newblock Springer, 2017.

\bibitem{bollig2012solution}
E.~F. Bollig, N.~Flyer, and G.~Erlebacher.
\newblock Solution to {PDE}s using radial basis function finite-differences
  ({RBF-FD}) on multiple {GPU}s.
\newblock {\em J. Comput. Phys.}, 231(21):7133--7151, 2012.

\bibitem{buhmann2003radial}
M.~D. Buhmann.
\newblock {\em Radial basis functions: theory and implementations}, volume~12.
\newblock Cambridge university press, 2003.

\bibitem{cherrie2000fast}
J.~Cherrie.
\newblock {\em Fast evaluation of radial basis functions: theory and
  application}.
\newblock PhD thesis, University of Canterbury, 2000.

\bibitem{christensen2016introduction}
O.~Christensen.
\newblock {\em An Introduction to Frames and {R}iesz Bases}.
\newblock Birkh{\"a}user, 2nd edition, 2016.

\bibitem{coppe2022splines}
V.~Copp{\'e} and D.~Huybrechs.
\newblock Efficient function approximation on general bounded domains using
  splines on a cartesian grid.
\newblock {\em Adv. Comput. Math.}, 48:1--41, 2022.

\bibitem{coppe2020az}
V.~Copp{\'e}, D.~Huybrechs, R.~Matthysen, and M.~Webb.
\newblock The {AZ} algorithm for least squares systems with a known incomplete
  generalized inverse.
\newblock {\em SIAM J. Mat. Anal. Appl.}, 41(3):1237--1259, 2020.

\bibitem{elghaoui1996spectral}
M.~Elghaoui and R.~Pasquetti.
\newblock A spectral embedding method applied to the advection--diffusion
  equation.
\newblock {\em J. Comput. Phys.}, 125(2):464--476, 1996.

\bibitem{elghaoui1999mixed}
M.~Elghaoui and R.~Pasquetti.
\newblock Mixed spectral-boundary element embedding algorithms for the
  {N}avier--{S}tokes equations in the vorticity-stream function formulation.
\newblock {\em J. Comput. Phys.}, 153(1):82--100, 1999.

\bibitem{frykland2018}
F.~Frykland, E.~Lehto, and A.-K. Tornberg.
\newblock Partition of unity extension of functions on complex domains.
\newblock {\em J. Comput. Phys.}, 375:57--79, 2018.

\bibitem{golub2013matrix}
G.~H. Golub and C.~F. Van~Loan.
\newblock {\em Matrix computations}.
\newblock JHU press, 2013.

\bibitem{halko2011finding}
N.~Halko, P.-G. Martinsson, and J.~A. Tropp.
\newblock Finding structure with randomness: Probabilistic algorithms for
  constructing approximate matrix decompositions.
\newblock {\em SIAM Rev.}, 53(2):217--288, 2011.

\bibitem{hashemi2021}
B.~Hashemi, Y.~Nakatsukasa, and L.~N. Trefethen.
\newblock Rectangular eigenvalue problems.
\newblock {\em Adv. Comp. Math.}, 48(80), 2022.

\bibitem{herremans2023az}
A.~Herremans and D.~Huybrechs.
\newblock Efficient function approximation in enriched approximation spaces.
\newblock Technical Report arXiv:2308.05652.

\bibitem{hester2021improving}
E.~W. Hester, G.~M. Vasil, and K.~J. Burns.
\newblock Improving accuracy of volume penalised fluid-solid interactions.
\newblock {\em J. Comput. Phys.}, 430:110043, 2021.

\bibitem{kansa1990method2}
E.~Kansa.
\newblock Multiquadrics -- {A} scattered data approximation scheme with
  applications to computational fluid-dynamics -- {II} solutions to parabolic,
  hyperbolic and elliptic partial differential equations.
\newblock {\em Comput. Math. Appl.}, 19(8):147--161, 1990.

\bibitem{larsson2017rbfpartition}
E.~Larsson, V.~Shcherbakov, and A.~Heryudono.
\newblock A least squares radial basis function partition of unity method for
  solving {PDE}s.
\newblock {\em SIAM J. Sci. Comput.}, 39, 2017.

\bibitem{lyon2011fast}
M.~Lyon.
\newblock A fast algorithm for {F}ourier continuation.
\newblock {\em {SIAM} J. Sci. Comput.}, 33(6):3241--3260, 2011.

\bibitem{maierhofer2022collocationbie}
G.~Maierhofer and D.~Huybrechs.
\newblock An analysis of least-squares oversampled collocation methods for
  compactly perturbed boundary integral equations in two dimensions.
\newblock {\em J. Comput. Appl. Math.}, 416:114500, 2022.

\bibitem{maierhofer2022oversampled}
G.~Maierhofer and D.~Huybrechs.
\newblock Convergence analysis of oversampled collocation boundary element
  methods in 2{D}.
\newblock {\em Adv. Comput. Math.}, 48:1--39, 2022.

\bibitem{matthysen2016fastfe}
R.~Matthysen and D.~Huybrechs.
\newblock Fast algorithms for the computation of {F}ourier extensions of
  arbitrary length.
\newblock {\em SIAM J. Sci. Comput.}, 38(2):A899--A922, 2016.

\bibitem{matthysen2017fastfe2d}
R.~Matthysen and D.~Huybrechs.
\newblock Function approximation on arbitrary domains using fourier frames.
\newblock {\em SIAM J. Numer. Anal.}, 56:1360--1385, 2018.

\bibitem{ohtake20053d}
Y.~Ohtake, A.~Belyaev, and H.-P. Seidel.
\newblock 3d scattered data interpolation and approximation with multilevel
  compactly supported rbfs.
\newblock {\em Graphical Models}, 67(3):150--165, 2005.

\bibitem{peskin_2002}
C.~S. Peskin.
\newblock The immersed boundary method.
\newblock {\em Acta Numerica}, 11:479–517, 2002.

\bibitem{PiretFrames}
C.~Piret.
\newblock A radial basis function based frames strategy for bypassing the
  {R}unge phenomenon.
\newblock {\em {SIAM} J. Sci. Comput.}, 38(4):A2262--A2282, 2016.

\bibitem{platte2006}
R.~B. Platte and T.~A. Driscoll.
\newblock Eigenvalue stability of radial basis function discretizations for
  time-dependent problems.
\newblock {\em Comput. Math. with Appl.}, 51, 2006.

\bibitem{tominec2021rbffd2}
I.~Tominec and E.~Breznik.
\newblock An unfitted {RBF-FD} method in a least-squares setting for elliptic
  {PDE}s on complex geometries.
\newblock {\em J. Comp. Phys.}, 436:110283, 2021.

\bibitem{tominec2021rbfFD}
I.~Tominec, E.~Larsson, and A.~Heryudono.
\newblock A least squares radial basis function finite difference method with
  improved stability properties.
\newblock {\em SIAM J. Sci. Comput.}, 43, 2021.

\bibitem{tominec2022rbf}
I.~Tominec, P.-F. Villard, E.~Larsson, V.~Bayona, and N.~Cacciani.
\newblock An unfitted radial basis function generated finite difference method
  applied to thoracic diaphragm simulations.
\newblock {\em J. Comp. Phys.}, 469:111496, 2022.

\bibitem{wendland2004scattered}
H.~Wendland.
\newblock {\em Scattered data approximation}, volume~17.
\newblock Cambridge university press, Cambridge, 2004.

\bibitem{wendland2006computational}
H.~Wendland.
\newblock Computational aspects of radial basis function approximation.
\newblock In {\em Studies in Computational Mathematics}, volume~12, pages
  231--256. Elsevier, 2006.

\end{thebibliography}

\end{document}